\newtheorem{theorem}{Theorem}
\newtheorem{prop}[theorem]{Proposition}
\newtheorem{lemma}[theorem]{Lemma}
\newtheorem{definition}[theorem]{Definition}
\theoremstyle{definition}
\newtheorem{example}[theorem]{Example}
\newcommand{\rd}[1]{{\color{red} #1}}
\def\cprime{$'$}
\newcommand{\p}[1]{\mathcal #1}
\DeclareMathOperator{\SYT}{SYT}
\newcommand{\Z}{{\mathbb Z}}
\title{
Hook, line and sinker: a bijective proof of the skew shifted hook-length formula}
\author{Matja\v z Konvalinka}%
\address{Faculty of Mathematics and Physics, University of Ljubljana, and Institute of Mathematics, Physics and Mechanics, Ljubljana, Slovenia}
\urladdr{http://www.fmf.uni-lj.si/~konvalinka/}
\thanks{The author acknowledges the financial support from the Slovenian Research Agency (research core funding No. P1-0294).}
\date{\today}
\begin{document}

\begin{abstract}
A few years ago, Naruse presented a beautiful cancellation-free hook-length formula for skew shapes, both straight and shifted. The formula involves a sum over objects called \emph{excited diagrams}, and the term corresponding to each excited diagram has hook lengths in the denominator, like the classical hook-length formula due to Frame, Robinson and Thrall.\\
Recently, the formula for skew straight shapes was proved via a simple bumping algorithm. The aim of this paper is to extend this result to skew shifted shapes. Since straight skew shapes are special cases of skew shifted shapes, this is a bijection that proves the whole family of hook-length formulas, and is also the simplest known bijective proof for shifted (non-skew) shapes. A weighted generalization of Naruse's formula is also presented.
\end{abstract}

\maketitle

\section{Introduction} \label{intro}

The celebrated \emph{hook-length formula} gives an elegant product expression for the number of standard Young tableaux of fixed shape $\lambda$:

$$f^\lambda = \frac{|\lambda|!}{\prod_{u \in [\lambda]} h(u)}$$

\medskip

Here $h(u)$ is the hook length of the cell $u$.

\medskip

The formula gives dimensions of irreducible representations of the symmetric group. The formula was discovered by Frame,
Robinson and Thrall in~\cite{FRT} based on earlier results of Young \cite{You}, Frobenius \cite{Fro} and Thrall \cite{Thr}.  Since then, it has been reproved, generalized and extended in several different ways, and applied in a number of fields ranging from algebraic geometry to probability, and from group theory to the analysis of algorithms.

\medskip

Interestingly, an identical formula (when hook lengths are defined appropriately) also holds for strict partitions and standard Young tableaux of \emph{shifted} shape. Even though it was discovered before the formula for straight shapes \cite{Thr}, it is typically considered the ``lesser'' of the two: less well known, less interesting, and with more complicated proofs. Shortly after the famous Greene-Nijenhuis-Wilf probabilistic proof of the ordinary hook length formula \cite{gnw}, Sagan \cite{sagan} extended the argument to the shifted case. The proof needs a careful analysis of special cases and a delicate double induction, and therefore lacks the intuitiveness of the original hook-walk proof. In 1995, Krattenthaler \cite{krat1} provided a bijective proof. While short, it is very involved, as it needs a variant of Hillman-Grassl algorithm, a bijection that comes from Stanley's $(P,\omega)$-partition theory, and the involution principle of Garsia and Milne. A few years later, Fischer \cite{fischer} gave the most direct proof of the formula, in the spirit of Novelli-Pak-Stoyanovskii's \cite{nps} bijective proof of the ordinary hook-length formula. At almost 50 pages in length, the proof is very involved. Bandlow \cite{Ban} gave a short proof via interpolation, and there is a variant of the hook-walk proof in \cite{kon:shifted}; again, special cases need to be considered, and the bijection is hard to describe succinctly. There are also many generalizations of both formulas, such as the $q$-version of Kerov~\cite{Ker1}, and its further generalizations and variations (see~\cite{GH,Ker2} and also \cite{ckp}). There are also a great number of proofs of the more general Stanley's hook-content formula (see e.g.~\cite[Corollary 7.21.4]{ec2}), see for example \cite{rw, krat2, krat3}. 

\medskip

There is no (known) product formula for the number of standard Young tableaux of skew shape (straight or shifted), even though some formulas have been known for a long time. In 2014, Hiroshi Naruse \cite{naru} presented and outlined a proof of a remarkable cancellation-free generalization for skew shapes, both straight and shifted. Here we present the two formulas for shifted shapes.

\medskip

The \emph{shifted diagram of type B}, $[\lambda]^B$, of a strict partition $\lambda$ (i.e., a partition with distinct parts) is the set of cells $\{(i,j) \colon 1 \leq i \leq \ell(\lambda), i \leq j \leq i + \lambda_i - 1\}$. The \emph{shifted diagram of type D}, $[\lambda]^D$, of a strict partition $\lambda$ is the set of cells $\{(i,j) \colon 1 \leq i \leq \ell(\lambda), i + 1 \leq j \leq i + \lambda_i\}$.

\medskip

An \emph{excited move of type B} is the move of a cell $(i,j)$ of a shifted diagram to position $(i+1,j+1)$, provided that the cells $(i+1,j)$, $(i,j+1)$ and $(i+1,j+1)$ are not in the diagram. An \emph{excited move of type D}  is the move of:
\begin{itemize}
 \item a \emph{non-diagonal} cell $(i,j)$, $i + 1 < j$, of a shifted diagram to position $(i+1,j+1)$, provided that the cells $(i+1,j)$, $(i,j+1)$ and $(i+1,j+1)$ are not in the diagram, or
 \item a \emph{diagonal} $(i,i+1)$ cell of a shifted diagram diagonally to position $(i+2,i+3)$, provided that the cells $(i,i+2)$, $(i,i+3)$, $(i+1,i+2)$, $(i+1,i+3)$ and $(i+2,i+3)$ are not in the diagram.
\end{itemize}

Let $\p E^B(\lambda/\mu)$ (respectively, $\p E^D(\lambda/\mu)$) denote the set of all \emph{excited diagrams} of shifted shape $\lambda/\mu$, diagrams in $[\lambda]^B$ (resp., $[\lambda]^D$) obtained by taking the diagram of $\mu$ and performing series of excited moves of type B (resp., type D) in all possible ways. They were introduced by Ikeda and Naruse \cite{IN1}.

\medskip

Naruse's formula says that

\begin{align}
 f^{\lambda/\mu} &= |\lambda/\mu|! \sum_{D \in \p E^B(\lambda/\mu)} \prod_{u \in [\lambda]^B \setminus D} \frac 1{h^B(u)} = \frac{|\lambda/\mu|!}{\prod_{u \in [\lambda]^B } h^B(u)} \sum_{D \in \p E^B(\lambda/\mu)} \prod_{u \in  D} h^B(u)  \label{eqB} \\
 &= |\lambda/\mu|! \sum_{D \in \p E^D(\lambda/\mu)} \prod_{u \in [\lambda]^D \setminus D} \frac 1{h^D(u)} = \frac{|\lambda/\mu|!}{\prod_{u \in [\lambda]^D } h^D(u)} \sum_{D \in \p E^D(\lambda/\mu)} \prod_{u \in  D} h^D(u)  \label{eqD}
\end{align}

where all the hook lengths are evaluated in the diagram of $\lambda$.

\medskip

Our main result (Theorem \ref{thm1}) are the following formulas, valid for strict partitions $\lambda$, $\mu$ and for commutative variables $x_i$:

\begin{align}
\left( \sum_{k \in \p W^B(\mu,\lambda)} x_k \right) \sum_{D \in \p E^B(\lambda/\mu)} \prod_{(i,i) \in D} x_i \prod_{\substack{(i,j) \in D \\ i < j}} (x_i + x_j) &= \sum_{\mu \lessdot \nu \subseteq \lambda} \sum_{D \in \p E^B(\lambda/\nu)} \prod_{(i,i) \in D} x_i \prod_{\substack{(i,j) \in D \\ i < j}} (x_i + x_j) ,  \label{eqmainB} \\
\left( \sum_{k \in \p W^D(\mu,\lambda)} x_k \right) \sum_{D \in \p E^D(\lambda/\mu)} \prod_{(i,j) \in D} (x_i + x_j) &= \sum_{\mu \lessdot \nu \subseteq \lambda} \sum_{D \in \p E^D(\lambda/\nu)} \prod_{(i,j) \in D} (x_i + x_j),  \label{eqmainD}
\end{align}
where $\p W^B(\mu,\lambda)$ and $\p W^D(\mu,\lambda)$ are certain finite subsets of positive integers. The formulas specialize to recursive versions of equations \eqref{eqB} and \eqref{eqD}.

\medskip

Like in \cite{kon:naruse}, the proof of both formulas is bijective and uses a simple bumping algorithm. Moreover, note that:
\begin{itemize}
 \item if $\mu$ is the staircase shape $(\ell(\lambda),\ldots,1)$, the formulas \eqref{eqB} and \eqref{eqD} become the classical hook-length formula for straight (non-skew, non-shifted) shapes
 \item if $\mu$ is, more generally, a strict partition of the same length as $\lambda$, the formulas \eqref{eqB} and \eqref{eqD} become the hook-length formula for (straight) skew shapes (also due to Naruse)
 \item if $\mu = \emptyset$, the formulas \eqref{eqB} and \eqref{eqD} become the (also classical) hook-length formula for shifted shapes
\end{itemize}

Our algorithms thus also give bijective proofs of all these five important formulas. Let us emphasize that the non-skew cases (i.e, non-shifted and shifted) are not related in the sense that neither formula implies the other.

\medskip

The fact that the algorithm is so easy to describe (it takes less than a page!) is quite remarkable. One would hope that this fact will help the shifted hook-length formula overcome its ``lesser'' status; indeed, how can it be lesser if it is a straightforward generalization with a bijective proof that is not more complicated in any way?

\medskip

Of course, simplicity of definition does not imply that an algorithm is computationally efficient. We prove that the number of steps needed is $2^{\Omega(\sqrt n)}$ for partitions $\lambda,\mu$ of size at most $n$; see Theorem \ref{complexity}.

\medskip

This is a natural sequel to \cite{kon:naruse} and will only sketch some of the proofs that are similar. A reader who is familiar with \cite{kon:naruse} should note the following differences:
\begin{itemize}
 \item there is only one set of variables, not two, see Definition \ref{xi},
 \item the linear factors on the left of equations \eqref{eqmainB} and \eqref{eqmainD} are slightly different from the straight-shape case
 \item in \eqref{eqmainB}, there are monomials appearing in the product (not just binomials)
 \item the insertion algorithm always starts from the top row
\end{itemize}

\medskip

In Section \ref{def}, we give basic definitions and notation. In Section \ref{identity}, we motivate equation \eqref{eqmainB} and \eqref{eqmainD} and show how they imply \eqref{eqB} and \eqref{eqD}. In Section \ref{bijection}, we use versions of the bumping algorithm on tableaux to prove the identities bijectively. In Section \ref{weighted}, we show weighted versions of formulas \eqref{eqB} and \eqref{eqD}. In Section \ref{proofs}, we show examples and sketch the proofs of the technical statements from Section \ref{bijection}.

\section{Basic definitions and notation} \label{def}

A \emph{partition} is a weakly decreasing finite sequence of positive integers $\lambda = (\lambda_1,\lambda_2,\ldots,\lambda_\ell)$. We say that $\lambda$ is a \emph{strict partition} if the sequence is strictly decreasing. We call $|\lambda| = \lambda_1 + \cdots + \lambda_\ell $ the \emph{size} of $\lambda$ and $\ell = \ell(\lambda)$ the \emph{length} of $\lambda$. We write $\lambda_i = 0$ for $i > \ell(\lambda)$. The \emph{shifted diagrams of types B and D} of $\lambda$ are
$$[\lambda]^B = \{(i,j) \colon 1 \leq i \leq \ell(\lambda), i \leq j \leq \lambda_i + i - 1\}$$
and
$$[\lambda]^D = \{(i,j) \colon 1 \leq i \leq \ell(\lambda), i < j \leq \lambda_i + i\}.$$
Note that the difference is that the columns start with $2$ in type D.

\medskip

We call the elements of $[\lambda]^B$ and $[\lambda]^D$ the \emph{cells} of $\lambda$. For partitions $\mu$ and $\lambda$, we say that $\mu$ is \emph{contained} in $\lambda$, $\mu \subseteq \lambda$, if $[\mu]^B \subseteq [\lambda]^B$ (which is equivalent to $[\mu]^D \subseteq [\lambda]^D$). In that case, we say that $\lambda/\mu$
is a \emph{skew shape} of size $|\lambda/\mu| = |\lambda| - |\mu|$, and the diagrams of $\lambda/\mu$ are $[\lambda/\mu]^B = [\lambda]^B \setminus [\mu]^B$ and $[\lambda/\mu]^D = [\lambda]^D \setminus [\mu]^D$. We write $\mu \lessdot \lambda$ if $\mu \subseteq \lambda$
and $|\lambda/\mu| = 1$. In this case, we also say that $\lambda$ \emph{covers} $\mu$. Note that diagrams and cover relations are different than in the context of (not necessarily strict) partitions, while containment is the same.

\medskip

We often represent a strict partition $\lambda$ graphically, with a cell $(i,j)$ in $[\lambda]^B$ or $[\lambda]^D$ represented by a unit square in position $(i,j)$. In this paper, we use English notation, so for example the Young diagram of the partition $\lambda = (6,5,3,2)$ is

\medskip

\begin{center}
\ytableausetup{smalltableaux}
\begin{ytableau}
{} & {} & {} & {} & {} & {} \\ \none  & {} & {} & {} & {} & {} \\ \none & \none  & {} & {} & {} \\  \none & \none & \none  & {} & {}
\end{ytableau}

\end{center}

\medskip

We often omit parenthesis and commas, so we could write $\lambda = 6532$. Also note that the cells appear in rows $1,\ldots,\ell(\lambda)$ and either in columns $1,\ldots,\lambda_1$ or in columns $2,\ldots,\lambda_1+1$, depending on the type we are interested in.

\medskip

A \emph{corner} of $\lambda$ is a cell that can be removed from $[\lambda]^B$, i.e., a cell $(i,j) \in [\lambda]^B$ satisfying $(i+1,j),(i,j+1) \notin [\lambda]^B$. An \emph{outer corner} of $\lambda$ is a cell that can be added to $[\lambda]^B$, i.e., a cell $(i,j) \notin [\lambda]^B$ satisfying $i = 1$ or $(i-1,j) \in [\lambda]^B$, and $j = 1$
 or $(i,j-1) \in [\lambda]^B$. The partition $6532$ has (in type B) corners $(2,6)$ and $(4,5)$, and outer corners $(1,7)$, $(3,6)$ and $(5,5)$. In type D, add $1$ to the second coordinate in all cases.

\medskip

For a strict partition $\lambda$, denote by $\lambda_i'$ the number of cells in the $i$-th column of $[\lambda]^B$. For $\lambda = 6532$, we get $\lambda'_i = i$ for $i = 1,\ldots,4$, $\lambda'_5=4$, $\lambda'_6 = 2$, $\lambda'_i = 0$ for $i \geq 7$. Let us emphasize that $\lambda_i'$ retains the same meaning when we are in type D, so the number of cells in the $i$-th column of $[\lambda]^D$ is $\lambda'_{i-1}$.

\medskip

The \emph{type B hook length} of the cell $(i,j) \in [\lambda]^B$, $h^B(i,j)$, is defined as:
\begin{itemize}
 \item the number of cells in row $i$ if $i = j$
 \item the number of cells in rows $i$ and $j$ if $i < j \leq \ell(\lambda)$
 \item the number of cells weakly to the right of $(i,j)$ and below $(i,j)$ if $i < j$ and $j > \ell(\lambda)$
\end{itemize}
The definition for $i < j \leq \ell(\lambda)$ is often phrased as ``the number of cells weakly to the right of or below  $(i,j)$, and in row $j$, where the cell $(j,j)$ is counted twice''. For example, the hook lengths of the cells $(2,2)$, $(1,4)$ and $(2,5)$ in $[6532]^B$ are $5$, $8$ and $4$:

\medskip

\begin{center}
\ytableausetup{smalltableaux}
 \begin{ytableau}
{} & {} & {} & {} & {} & {} \\ \none  & *(gray!70) & *(gray!70) & *(gray!70) & *(gray!70) & *(gray!70) \\ \none & \none  &  & {} & {} \\  \none & \none & \none  &  &
\end{ytableau} \qquad \qquad
\begin{ytableau}
{} & & & *(gray!70) & *(gray!70) & *(gray!70) \\ \none  & {} &  &  *(gray!70) & & {} \\ \none & \none &  & *(gray!70) & {} \\  \none & \none & \none  & *(black) & *(gray!70)
\end{ytableau} \qquad \qquad
 \begin{ytableau}
{} & {} & {} & {} & {} & {} \\ \none  & {} & {} & & *(gray!70) & *(gray!70) \\ \none & \none  & {} &  & *(gray!70) \\  \none & \none & \none  & & *(gray!70)
\end{ytableau}
\end{center}

\medskip

The black square is counted twice.

\medskip

The \emph{type D hook length} of the cell $(i,j) \in [\lambda]^D$ is defined as the number of cells weakly to the right of $(i,j)$, below $(i,j)$, and in row $j$. For example, the hook lengths of the cells $(1,4)$ and $(2,5)$ in  $[6532]^D$ are $8$ and $5$:

\medskip

\begin{center}
\ytableausetup{smalltableaux}
 \begin{ytableau}
{} & {} & *(gray!70) & *(gray!70) & *(gray!70) & *(gray!70) \\ \none  & {} & *(gray!70) & {} & {} & {} \\ \none & \none  & *(gray!70) & {} & {} \\  \none & \none & \none  & *(gray!70) & *(gray!70)
\end{ytableau} \qquad \qquad \qquad
 \begin{ytableau}
{} & {} & {} & {} & {} & {} \\ \none  & {} & {} & *(gray!70) & *(gray!70) & *(gray!70) \\ \none & \none  & {} & *(gray!70) & {} \\  \none & \none & \none  & *(gray!70) & {}
\end{ytableau}

\end{center}

\medskip

In both cases, the \emph{B/D hook} is the set of cells used to define the hook length.

\medskip

A \emph{standard Young tableau} (or \emph{SYT} for short) of shifted shape $\lambda$ is a bijective map $T$ from the diagram of $\lambda$ to $\{1,\ldots,|\lambda|\}$, $(i,j) \mapsto T_{ij}$, satisfying $T_{ij} < T_{i,j+1}$ if $(i,j),(i,j+1)$ are in the diagram and $T_{ij} < T_{i+1,j}$ if $(i,j),(i+1,j)$ are in the diagram. The number of SYT's of shifted shape $\lambda$ is denoted by $f^\lambda$, and the set of standard Young tableaux of shape
$\lambda/\mu$ by $\SYT(\lambda/\mu)$. The type does not matter in this definition. The following illustrates $f^{42} = 5$:

\medskip

\begin{center}
\ytableausetup{centertableaux}
\ytableausetup{smalltableaux}
\begin{ytableau}
1 & 2 & 3 & 4\\
\none & 5 & 6
\end{ytableau}
\qquad
\begin{ytableau}
1 & 2 & 3 & 5 \\
\none & 4 & 6
\end{ytableau}
\qquad
\begin{ytableau}
1 & 2 & 3 & 6 \\
\none & 4 & 5
\end{ytableau}
\qquad
\begin{ytableau}
1 & 2 & 4 & 5 \\
\none & 3 & 6
\end{ytableau}
\qquad
\begin{ytableau}
1 & 2 & 4 & 6 \\
\none & 3 & 5
\end{ytableau}
\end{center}

\medskip

The \emph{shifted hook-length formula} gives a product expression for the number of standard Young tableaux:

$$f^\lambda = \frac{|\lambda|!}{\prod_{u \in [\lambda]} h^B(u)} = \frac{|\lambda|!}{\prod_{u \in [\lambda]} h^D(u)}$$

The formulas are equivalent: one multiset of hook lengths is a permutation of the other. For example, $f^{42} = \frac{6!}{4 \cdot 6 \cdot 3 \cdot 1 \cdot 2 \cdot 1} = \frac{6!}{6 \cdot 4 \cdot 3 \cdot 1 \cdot 2 \cdot 1} = 5$.

\medskip

We can define a standard Young tableau of skew shifted shape $\lambda/\mu$ analogously. The number of SYT's of shifted shape $\lambda/\mu$ is denoted by $f^{\lambda/\mu}$. The following illustrates $f^{543/2} = 12$:

\begin{center}
\ytableausetup{centertableaux}
\ytableausetup{smalltableaux}

\begin{ytableau}
 *(gray!70) & *(gray!70) & 1 & 2 \\
\none & 3 & 4 & 5 \\
\none & \none & 6 & 7
\end{ytableau}
\qquad
\begin{ytableau}
 *(gray!70) & *(gray!70) & 1 & 2 \\
\none & 3 & 4 & 6 \\
\none & \none & 5 & 7
\end{ytableau}
\qquad
\begin{ytableau}
 *(gray!70) & *(gray!70) & 1 & 3 \\
\none & 2 & 4 & 5 \\
\none & \none & 6 & 7
\end{ytableau}
\qquad
\begin{ytableau}
 *(gray!70) & *(gray!70) & 1 & 3 \\
\none & 2 & 4 & 6 \\
\none & \none & 5 & 7
\end{ytableau}
\qquad
\begin{ytableau}
 *(gray!70) & *(gray!70) & 1 & 4 \\
\none & 2 & 3 & 5 \\
\none & \none & 6 & 7
\end{ytableau}
\qquad
\begin{ytableau}
 *(gray!70) & *(gray!70) & 1 & 4 \\
\none & 2 & 3 & 6 \\
\none & \none & 5 & 7
\end{ytableau}

\vspace{0.3cm}

\begin{ytableau}
 *(gray!70) & *(gray!70) & 1 & 5 \\
\none & 2 & 3 & 6 \\
\none & \none & 4 & 7
\end{ytableau}
\qquad
\begin{ytableau}
 *(gray!70) & *(gray!70) & 2 & 3 \\
\none & 1 & 4 & 5 \\
\none & \none & 6 & 7
\end{ytableau}
\qquad
\begin{ytableau}
 *(gray!70) & *(gray!70) & 2 & 3 \\
\none & 1 & 4 & 6 \\
\none & \none & 5 & 7
\end{ytableau}
\qquad
\begin{ytableau}
 *(gray!70) & *(gray!70) & 2 & 4 \\
\none & 1 & 3 & 5 \\
\none & \none & 6 & 7
\end{ytableau}
\qquad
\begin{ytableau}
 *(gray!70) & *(gray!70) & 2 & 4 \\
\none & 1 & 3 & 6 \\
\none & \none & 5 & 7
\end{ytableau}
\qquad
\begin{ytableau}
 *(gray!70) & *(gray!70) & 2 & 5 \\
\none & 1 & 3 & 6 \\
\none & \none & 4 & 7
\end{ytableau}

\end{center}

\medskip

Suppose that $D \subseteq [\lambda]^B$. If $(i,j) \in D$ and $(i+1,j),(i,j+1),(i+1,j+1) \in [\lambda]^B \setminus D$, then an \emph{excited move of type B} with respect to $\lambda$ is the replacement of $D$ with $D' = D \setminus \{(i,j)\} \cup \{(i+1,j+1)\}$. For strict partitions $\lambda,\mu$, then an \emph{excited diagram} of type B and shape $\lambda/\mu$ is a diagram contained in $[\lambda]^B$ that can be obtained from $[\mu]^B$ with a series of excited moves. Let $\p E^B(\lambda/\mu)$ denote the (empty unless $\mu \subseteq \lambda$) set of all excited diagrams of type B and shape $\lambda/\mu$. The following shows $\p E^B(432/2)$:

\medskip

\begin{center}
\ytableausetup{smalltableaux}
 \begin{ytableau}
 *(gray!70) & *(gray!70) & & \\
  \none & & & \\
  \none & \none & &\\
\end{ytableau}
\qquad
\begin{ytableau}
 *(gray!70) &  & & \\
  \none & & *(gray!70) & \\
  \none & \none & &\\
\end{ytableau}
\qquad
\begin{ytableau}
 *(gray!70) & & & \\
  \none & & & \\
  \none & \none & & *(gray!70)\\
\end{ytableau}
\qquad
\begin{ytableau}
 {} & & & \\
  \none & *(gray!70) & *(gray!70) & \\
  \none & \none & & \\
\end{ytableau}
\qquad
\begin{ytableau}
 {} & & & \\
  \none &*(gray!70) & & \\
  \none & \none & & *(gray!70)\\
\end{ytableau}
\qquad
\begin{ytableau}
{} & & & \\
  \none & & & \\
  \none & \none & *(gray!70) & *(gray!70)\\
\end{ytableau}
\end{center}

\medskip

If $(i,j) \in D$ for $i + 1 < j$, $(i+1,j),(i,j+1),(i+1,j+1) \in [\lambda]^D \setminus D$, then an \emph{excited move of type D} with respect to $\lambda$ is the replacement of $D$ with $D' = D \setminus \{(i,j)\} \cup \{(i+1,j+1)\}$. On the other hand, if $(i,i+1) \in D$, $(i,i+2),(i,i+3),(i+1,i+2),(i+1,i+3),(i+2,i+3) \in [\lambda]^D \setminus D$, an excited move of type D with respect to $\lambda$ is the replacement of $D$ with $D' = D \setminus \{(i,i+1)\} \cup \{(i+2,i+3)\}$. If $\mu \subseteq \lambda$, then an \emph{excited diagram} of type D and shape $\lambda/\mu$ is a diagram contained in $[\lambda]^D$ that can be obtained from $[\mu]^D$ with a series of excited moves. Let $\p E^D(\lambda/\mu)$ denote the set of all excited diagrams of type D and shape $\lambda/\mu$. The following shows $\p E^D(432/2)$:

\medskip

\begin{center}
\ytableausetup{smalltableaux}
 \begin{ytableau}
 *(gray!70) & *(gray!70) & & \\
  \none & & & \\
  \none & \none & &\\
\end{ytableau}
\qquad
\begin{ytableau}
 *(gray!70) &  & & \\
  \none & & *(gray!70) & \\
  \none & \none & &\\
\end{ytableau}
\qquad
\begin{ytableau}
 *(gray!70) & & & \\
  \none & & & \\
  \none & \none & & *(gray!70)\\
\end{ytableau}
\qquad
\begin{ytableau}
{} & & & \\
  \none & & & \\
  \none & \none & *(gray!70) & *(gray!70)\\
\end{ytableau}
\end{center}

\medskip

Recall from the introduction that Naruse's formula says that

\begin{equation*}
 f^{\lambda/\mu} = \frac{|\lambda/\mu|!}{\prod_{u \in [\lambda] } h^B(u)} \sum_{D \in \p E^B(\lambda/\mu)} \prod_{u \in  D} h^B(u) = \frac{|\lambda/\mu|!}{\prod_{u \in [\lambda] } h^D(u)} \sum_{D \in \p E^D(\lambda/\mu)} \prod_{u \in  D} h^D(u)
\end{equation*}
where all the hook lengths are evaluated in $[\lambda]^B$ (resp., $[\lambda]^D$).

\medskip

For example, the formula confirms that
\begin{multline*}
f^{432/2} = \frac{7!}{4 \cdot 7 \cdot 6 \cdot 3 \cdot 3 \cdot 5 \cdot 2 \cdot 2 \cdot 1} \left( 4 \cdot 7 + 4 \cdot 5 + 4 \cdot 1 + 3 \cdot 5 + 3 \cdot 1 + 2 \cdot 1 \right) \\
= \frac{7!}{7 \cdot 6 \cdot 4 \cdot 3 \cdot 5 \cdot 3 \cdot 2 \cdot 2 \cdot 1} \left( 7 \cdot 6 + 7 \cdot 3 + 7 \cdot 1 + 2 \cdot 1 \right) = 12
\end{multline*}

As we can see, the formulas are no longer equivalent.

\section{A polynomial identity} \label{identity}

It is clear that both sides of \eqref{eqB} and \eqref{eqD} are equal to $1$ if $\lambda = \mu$. Since the minimal entry of a shifted standard Young tableau of shape $\lambda/\mu$ must be in an outer corner of $\mu$ which lies in $\lambda$, we have $f^{\lambda/\mu} = \sum_{\mu \lessdot \nu \subseteq \lambda} f^{\lambda/\nu}$, where $\sum_{\mu \lessdot \nu \subseteq \lambda}$ denotes the sum over all strict partitions $\nu$ that are contained in $\lambda$ and cover $\mu$. If we show that the right-hand sides of \eqref{eqB} and \eqref{eqD} satisfy the same recursion, we are done. Therefore the statements are equivalent to the following identities:

\begin{align}
(|\lambda| - |\mu|) \sum_{D \in \p E^B(\lambda/\mu)} \prod_{u \in D} h^B(u) & = \sum_{\mu \lessdot \nu \subseteq \lambda} \sum_{D \in \p E^B(\lambda/\nu)} \prod_{u \in D} h^B(u). \label{eq2B} \\
(|\lambda| - |\mu|) \sum_{D \in \p E^D(\lambda/\mu)} \prod_{u \in D} h^D(u) & = \sum_{\mu \lessdot \nu \subseteq \lambda} \sum_{D \in \p E^D(\lambda/\nu)} \prod_{u \in D} h^D(u). \label{eq2D}
\end{align}

\begin{definition}\label{xi}
Define variables $x_i^B, x_i^D$, $i = 1,2,\ldots$, as follows:
\begin{itemize}
 \item if $i \leq \ell(\lambda)$, then $x_i^B = x_i^D$ is the number of cells in the $i$-th row of diagram $[\lambda]$; in other words, $x_i^B = x_i^D = \lambda_i$;
 \item if $i > \ell(\lambda)$, then $x_i^B$ is the number of cells in the $i$-th column of diagram $[\lambda]^B$, decreased by $i$, and $x_i^D$ is the number of cells in the $i$-th column of diagram $[\lambda]^D$ (which is the same as the number of cells in the $(i-1)$-th column of $[\lambda]^B$), decreased by $i-1$; in other words, $x_i^B = \lambda_{i}' - i$ and $x_i^D = \lambda_{i-1}'-i+1$.
\end{itemize}
\end{definition}

For example, for $\lambda = 432$, we have $x^B_1 = x^D_1 = 4$, $x^B_2 = x^D_2 = 3$, $x^B_3 = x^D_3 = 2$, $x^B_4 = -1$, $x^D_4 = 0$, $x^B_i = -i$ for $i \geq 5$, $x^D_5 = -1$, $x^D_i = -i+1$ for $i \geq -6$. It is clear that the sequences $x^B_1,x^B_2,\ldots$ and $x^D_1,x^D_2,\ldots$ are strictly decreasing. Also, $x^D_{\ell(\lambda) + 1}$ is always $0$.

\medskip

It is easy to see that for $(i,j) \in [\lambda]$, we have
\begin{align}
h^B_{ii} &= x^B_i,  & & 1 \leq i \leq \ell(\lambda) \label{hooks1} \\
h^B_{ij} &= x^B_i + x^B_j,  & & 1 \leq i \leq \ell(\lambda), \: i < j \leq \lambda_i + i - 1 \label{hooks2} \\
h^D_{ij} &= x^D_i + x^D_j,  & & 1 \leq i \leq \ell(\lambda), \: i < j \leq \lambda_i + i \label{hooks3}
\end{align}

\begin{definition} \label{ws}
 For arbitrary strict partitions $\lambda,\mu$, let $\p W^B(\mu,\lambda)$ consist of
\begin{itemize}
 \item $k \leq \ell(\lambda)$ for which $\lambda_k \neq \mu_i$ for all $i$,
 \item $k \geq \ell(\lambda) + 1$ for which $\lambda'_k - k \neq \mu'_i - i$ for all $i$,
\end{itemize}
 and let $\p W^D(\mu,\lambda)$ consist of
\begin{itemize}
 \item $k \leq \ell(\lambda)$ for which $\lambda_k \neq \mu_i$ for all $i$
 \item $\ell(\lambda) + 1$ if $\ell(\lambda) - \ell(\mu)$ is odd
 \item $k \geq \ell(\lambda) + 2$ for which $\lambda'_k - k \neq \mu'_i - i$ for all $i$
\end{itemize}
Furthermore, let $\p W(\lambda) = \{x_1,\ldots,x_{\lambda_1+1}\}$.
\end{definition}

A crucial role will also be played by the following lemma.

\begin{lemma} \label{w}
 For arbitrary strict partitions $\lambda,\mu$ (not necessarily satisfying $\mu \subseteq \lambda$) and $x^B_k$, $x^D_k$ defined as above (for $\lambda$), we have
 $$|\lambda| - |\mu| = \sum_{k \in \p W^B(\mu,\lambda)} x^B_k = \sum_{k \in \p W^D(\mu,\lambda)} x^D_k.$$
\end{lemma}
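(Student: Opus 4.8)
The plan is to package the variables into integer sets $\p X^B(\lambda)=\{x^B_k : k\ge1\}$ and $\p X^D(\lambda)=\{x^D_k : k\ge1\}\subseteq\Z$ (these are genuine sets, since each sequence $x^\bullet_1>x^\bullet_2>\cdots$ is strictly decreasing), and to prove the \emph{sign-flip descriptions}
\[
\p X^B(\lambda)=\bigl(\Z_{<0}\setminus\{-\lambda_i\}\bigr)\cup\{\lambda_i\},
\qquad
\p X^D(\lambda)=\bigl(\Z_{\le0}\setminus\{-\lambda_i\}\bigr)\cup\{\lambda_i\},
\]
where $i$ runs over $1\le i\le\ell(\lambda)$; thus each set is obtained from the (strictly, resp.\ weakly) negative integers by flipping each $-\lambda_i$ to $+\lambda_i$, and the two differ only in whether they contain $0$. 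The positive part is immediate from \eqref{hooks1}, since $x^\bullet_k=\lambda_k>0$ for $k\le\ell(\lambda)$ while $x^\bullet_k\le0$ for $k>\ell(\lambda)$. For the rest, both descriptions reduce to the single function $f(k)=k-\lambda'_k$, because $x^B_k=-f(k)$ and $x^D_k=-f(k-1)$ for $k>\ell(\lambda)$. Using $\lambda'_i=i$ for $1\le i\le\ell(\lambda)$, one has $f(k)=0$ for $k\le\ell(\lambda)$, while for $k\ge\ell(\lambda)$ the function $f$ is strictly increasing with $f(k+1)-f(k)=1+(\lambda'_k-\lambda'_{k+1})$. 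Hence $f$ omits a positive integer exactly where a column is shorter than its predecessor, i.e.\ where a block of rows ends; identifying each omitted value with the length of the row ending there gives $\{f(k):k>\ell(\lambda)\}=\Z_{>0}\setminus\{\lambda_i\}$, and negating yields the negative parts of both descriptions.

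Next I would unwind Definition \ref{ws}. Since $\mu'_i=i$ for $i\le\ell(\mu)$, the numbers $\mu'_i-i$ ($i\ge1$) consist of $0$ (for $i\le\ell(\mu)$) together with the values $\mu'_i-i=x^B_i(\mu)$ ($i>\ell(\mu)$); in particular their negative members are exactly the negative part of $\p X^B(\mu)$, which coincides with the negative part of $\p X^D(\mu)$ (these sets differ only in containing $0$). Comparing with the sign-flip descriptions, membership $k\in\p W^B(\mu,\lambda)$ is equivalent to $x^B_k(\lambda)\notin\p X^B(\mu)$: for $k\le\ell(\lambda)$ the condition $\lambda_k\ne\mu_i$ says the positive value $x^B_k$ is not a positive value of $\mu$, and for $k>\ell(\lambda)$ the condition $\lambda'_k-k\ne\mu'_i-i$ says the negative value $x^B_k$ is not a negative value of $\mu$. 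Reading the third clause of $\p W^D$ as the same condition imposed on the value $x^D_k$ itself (equivalently $\lambda'_{k-1}-(k-1)\ne\mu'_i-i$, the natural type-D analogue), the identical equivalence holds in type D, with the sole exception of the index $\ell(\lambda)+1$: its value $x^D_{\ell(\lambda)+1}=0$ always lies in $\p X^D(\mu)$, so it is never selected by ``$x^D_k\notin\p X^D(\mu)$'', being governed instead by the parity clause; because this value equals $0$, including or excluding it does not change the sum. Since $k\mapsto x^\bullet_k(\lambda)$ is a bijection onto $\p X^\bullet(\lambda)$, we conclude in both types that
\[
\sum_{k\in\p W^\bullet(\mu,\lambda)} x^\bullet_k = \sum_{v\in \p X^\bullet(\lambda)\setminus \p X^\bullet(\mu)} v .
\]

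Finally I would evaluate the right-hand side by inclusion--exclusion. Writing $S_\lambda=\{\lambda_i\}$ and $S_\mu=\{\mu_j\}$ as sets of positive integers (so $\sum S_\lambda=|\lambda|$ and $\sum S_\mu=|\mu|$), the sign-flip descriptions show that an element $v\in\p X^\bullet(\lambda)\setminus\p X^\bullet(\mu)$ is either positive, in which case $v\in S_\lambda\setminus S_\mu$, or negative, in which case $-v\in S_\mu\setminus S_\lambda$ (the value $0$, present in both type-D sets, never contributes). Therefore
\[
\sum_{v\in \p X^\bullet(\lambda)\setminus \p X^\bullet(\mu)} v
= \sum_{v\in S_\lambda\setminus S_\mu} v - \sum_{w\in S_\mu\setminus S_\lambda} w
= \Bigl(|\lambda|-\sum_{v\in S_\lambda\cap S_\mu}v\Bigr)-\Bigl(|\mu|-\sum_{w\in S_\lambda\cap S_\mu}w\Bigr)
= |\lambda|-|\mu|,
\]
the common intersection sums cancelling. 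This proves both equalities at once, for arbitrary strict $\lambda,\mu$ with no containment hypothesis.

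The main obstacle is the sign-flip description itself, namely the claim that the column offsets $k-\lambda'_k$ for $k>\ell(\lambda)$ enumerate precisely the positive integers that are \emph{not} parts of $\lambda$; this is the one point where the geometry of the shifted diagram enters, and making the gap-to-part correspondence precise (in particular when several rows end in the same column, so that $f$ jumps by more than one) requires a careful, though routine, bijection between the omitted block of integers and the set of lengths of the rows ending there. A secondary subtlety, specific to type D, is the bookkeeping around $0=x^D_{\ell(\lambda)+1}$: one must verify that $0$ lies in every $\p X^D(\mu)$ and hence may be ignored in the sum, which is exactly why the parity clause in the definition of $\p W^D$ is harmless here even though it does not fit the uniform ``$x^\bullet_k\notin\p X^\bullet(\mu)$'' characterization.
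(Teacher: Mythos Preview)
Your proof is correct and takes essentially the same route as the paper: both rest on the key observation that the negatives of $x^B_k$ for $k>\ell(\lambda)$ are exactly the positive integers not among the $\lambda_i$ (your ``sign-flip description''), after which the sum over $\p W^\bullet(\mu,\lambda)$ reduces to $\sum_{v\in S_\lambda\setminus S_\mu} v-\sum_{w\in S_\mu\setminus S_\lambda} w=|\lambda|-|\mu|$. Your reinterpretation of the third clause of $\p W^D$ as a condition on $x^D_k=\lambda'_{k-1}-(k-1)$ rather than on $\lambda'_k-k$ is indeed the intended reading, as the paper's own worked example (where $8\in\p W^D(431,865321)$) confirms.
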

\begin{proof}
Since we have $\lambda'_k -k = \mu'_k -k = -k$ for large $k$, the sum in the lemma is finite. Now note the following: if $j$ appears as a part of a strict partition $\lambda$, $j = \lambda_i$, then $\lambda'_{i+j-1} \geq i$ and $\lambda'_{i+j} < i$. That means that $x^B_{i+j-1} = \lambda_{i+j-1}-i-j+1 \geq -j + 1$ and $x^B_{i+j} = \lambda_{i+j}-i-j < -j$. In other words, $-j$ does \emph{not} appear among $x^B_k$'s. On the other hand, if $j$ does not appear as a part of $\lambda$, there exists $i \geq 0$ so that $\lambda_{i+1} < j < \lambda_i$. Then $\lambda'_{i+j} = i$ and $x^B_{i+j} = i - i - j = -j$. We have proved that $\{\lambda_1,\ldots,\lambda_{\ell(\lambda)}\} = \{-x^B_k \colon k > \ell(\lambda)\}^C$, where the complement is taken with respect to the set of non-negative integers.\\
Clearly, $$|\lambda| - |\mu| = \sum_{k=1}^{\ell(\lambda)} \lambda_k - \sum_{k=1}^{\ell(\mu)} \mu_k = \sum_{\substack{k \leq \ell(\lambda) \\ \nexists i \colon \lambda_k = \mu_i}} \lambda_k - \sum_{\substack{k \leq \ell(\mu) \\ \nexists i \colon \mu_k = \lambda_i}} \mu_k.$$
Now
$$\{\mu_1,\ldots,\mu_{\ell(\mu)}\} \setminus \{\lambda_1,\ldots,\lambda_{\ell(\lambda)}\} = \{\lambda_1,\ldots,\lambda_{\ell(\lambda)}\}^C \setminus \{\mu_1,\ldots,\mu_{\ell(\mu)}\}^C = \{ -x^B_k \colon k > \ell(\lambda), \lambda'_k-k \neq \mu_i'-i\},$$
which proves the first statement. The proof for type D is very similar. As mentioned before, $x^D_{\ell(\lambda)+1} = 0$, so including it or not does not influence the sum.
\end{proof}

\begin{example}
 Take $\lambda = 432$ and $\mu = 2$. Since the parts $4$ and $3$ of $\lambda$ are not parts of $\mu$, while $2$ is, we have $1,2 \in \p W^B(432,2)$, $3 \notin \p W^B(432,2)$. Since
 $$(\lambda'_k-k)_{k \geq 4} = (-1,-5,-6,-7,\ldots), \qquad (\mu'_j-j)_{j \geq 1} = (0,-1,-2,-3,\ldots)$$
 none of the integers $4,5,\ldots$ are in $\p W^B(432,2)$. Indeed, $|\lambda| - |\mu| = x^B_1 + x^B_2 = 7$. We similarly get $|\lambda| - |\mu| = x^D_1 + x^D_2 = 7$.\\
 For $\lambda = 865321$ and $\mu = 431$, we have $\ell(\lambda)-\ell(\mu) = 3$,
 $$ (\lambda'_k-k)_{k \geq 7} = (-4,-7,-9,-10,-11,\ldots), \qquad (\mu'_j-j)_{j \geq 1} =  (0,0,0,-2,-5,-6,-7,\ldots)$$
 and $|\lambda| - |\mu| = x^B_1 + x^B_2 + x^B_3 + x^B_5 + x^B_7 = 8 + 6 + 5 + 2 + (-4) = 17$ and $|\lambda| - |\mu| = x^D_1 + x^D_2 + x^D_3 + x^D_5 + x^D_7 + x^D_8 = 8 + 6 + 5 + 2 + 0 + (-4) = 17$.
\end{example}

The following theorem is our main result. It gives two subtraction-free polynomial identities, which, by equations \eqref{hooks1}--\eqref{hooks3} and Lemma \ref{w}, specialize to equations \eqref{eq2B} and \eqref{eq2D} when $x_i = x_i^B$ (resp., $x_i = x_i^D$), and therefore imply the hook-length formulas for skew shifted diagrams.

\begin{theorem} \label{thm1}
For arbitrary strict partitions $\lambda$, $\mu$ and commutative variables $x_i$, we have
\begin{align*}
\left( \sum_{k \in \p W^B(\mu,\lambda)} x_k \right) \sum_{D \in \p E^B(\lambda/\mu)} \prod_{(i,i) \in D} x_i \prod_{\substack{(i,j) \in D \\ i < j}} (x_i + x_j) &= \sum_{\mu \lessdot \nu \subseteq \lambda} \sum_{D \in \p E^B(\lambda/\nu)} \prod_{(i,i) \in D} x_i \prod_{\substack{(i,j) \in D \\ i < j}} (x_i + x_j), \\
\left( \sum_{k \in \p W^D(\mu,\lambda)} x_k \right) \sum_{D \in \p E^D(\lambda/\mu)} \prod_{(i,j) \in D} (x_i + x_j) &= \sum_{\mu \lessdot \nu \subseteq \lambda} \sum_{D \in \p E^D(\lambda/\nu)} \prod_{(i,j) \in D} (x_i + x_j).
\end{align*}
\end{theorem}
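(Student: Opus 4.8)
The plan is to prove both identities bijectively by constructing, for each term on the left-hand side, a corresponding term on the right-hand side, and vice versa, via a bumping/insertion algorithm on excited diagrams analogous to the one in \cite{kon:naruse}. Expanding the product $\left(\sum_{k} x_k\right)\sum_{D}(\cdots)$ on the left, a typical monomial comes from choosing an excited diagram $D \in \p E^B(\lambda/\mu)$ (or $\p E^D(\lambda/\mu)$), choosing for each non-diagonal cell of $D$ one of the two summands $x_i$ or $x_j$, and choosing an index $k \in \p W^{B/D}(\mu,\lambda)$. On the right, a monomial comes from a cover $\nu$ with $\mu \lessdot \nu \subseteq \lambda$, an excited diagram $D' \in \p E^{B/D}(\lambda/\nu)$, and a choice of summand for each non-diagonal cell of $D'$. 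So the core task is to set up a weight-preserving bijection between these two sets of decorated combinatorial objects.

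First I would encode an excited diagram $D$ together with the per-cell choices as a filling of $[\lambda]$: following the standard dictionary (see e.g.\ \cite{IN1, kon:naruse}), an excited diagram of shape $\lambda/\mu$ corresponds to a flagged-type array recording, in each cell, which "original row" of $[\mu]$ it descended from; the summand choice $x_i$ versus $x_j$ then records a direction. The extra factor $x_k$ for $k \in \p W^{B/D}(\mu,\lambda)$ should be thought of as a "new cell" to be inserted. I would then define the insertion: start the bumping from the top row, insert the value $k$ (or the cell it represents), and let it bump its way down and to the right through $D$, each bump converting one $x_i$-or-$x_j$ choice appropriately, until it settles. The output is a diagram $D'$ that is an excited diagram for some $\nu$ with $\mu \lessdot \nu$, together with updated choices; the inverse algorithm runs the bumps in reverse, reading off which $k \in \p W^{B/D}(\mu,\lambda)$ was inserted. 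The four enumerated "differences" listed in the introduction (one set of variables; slightly different linear factors; monomials, not just binomials, in type B; insertion always starting from the top row) are exactly the points where the present algorithm departs from \cite{kon:naruse} and must be checked by hand.

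The type B case has the genuinely new feature that diagonal cells $(i,i)$ contribute a monomial $x_i$ rather than a binomial, and the type-B excited move on a diagonal cell is the ordinary single-step move, whereas in type D a diagonal cell $(i,i+1)$ jumps two steps to $(i+2,i+3)$. The hard part will be handling the diagonal: I expect the main obstacle to be showing that the bumping algorithm interacts correctly with the diagonal cells — that a bump arriving at the diagonal in type B produces a valid excited configuration for the new partition $\nu$ and that the monomial/binomial bookkeeping balances, and in type D that the two-step diagonal moves are respected and that the parity condition (the term $x_{\ell(\lambda)+1}$ in $\p W^D$ when $\ell(\lambda)-\ell(\mu)$ is odd, together with $x^D_{\ell(\lambda)+1}=0$) is exactly what makes the count work out. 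To organize this, I would isolate the needed combinatorial facts about how bumps propagate near the diagonal as separate technical lemmas (to be proved in Section \ref{proofs}), verify the algorithm on the running examples $\p E^B(432/2)$ and $\p E^D(432/2)$, and then argue that the map and its inverse are mutually inverse and weight-preserving, which yields the two identities and hence, via \eqref{hooks1}--\eqref{hooks3} and Lemma \ref{w}, the hook-length formulas.
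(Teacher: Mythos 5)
Your outline matches the paper's strategy: encode each excited diagram together with its per-cell summand choices as a \emph{bicolored tableau} of shape $\mu$ (a filling by nonnegative integers recording how far each cell of $[\mu]$ has been excited, colored black or red according to whether $x_{i+T_{ij}}$ or $x_{j+T_{ij}}$ was chosen), and then move a new index $k$ into this tableau by a bumping procedure that outputs a bicolored tableau of some shape $\nu$ with $\mu \lessdot \nu$. The diagonal subtleties you flag (monomials on the diagonal in type B, the two-step diagonal move and parity condition in type D) are indeed where the algorithm departs from the straight-shape case, and the paper handles them exactly as you anticipate, by case analysis in the definition of the bump.

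However, there is one genuine gap in your plan: you treat a \emph{single} insertion as the bijection between $\p B^B(\mu,\lambda) \times \p W^B(\mu,\lambda)$ and $\bigcup_{\nu} \p B^B(\nu,\lambda)$, with the inverse obtained by running the bumps backwards. This cannot work as stated. The insertion process is naturally a bijection between the \emph{unrestricted} sets $\p B^B(\mu) \times \{x_1,x_2,\ldots\}$ and $\bigcup_{\nu} \p B^B(\nu)$; it does not respect the flag conditions $j \leq \lambda_{i+T_{ij}}+i-1$ that cut these down to the finite sets appearing in the identity, nor does it see the restriction of the inserted index to $\p W^B(\mu,\lambda)$. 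Inserting a legal $k \in \p W^B(\mu,\lambda)$ into a tableau of $\p B^B(\mu,\lambda)$ can produce a tableau violating the bound for $\lambda$, in which case the paper removes the offending cell and \emph{re-inserts} the corresponding index, iterating until the output lands in some $\p B^B(\nu,\lambda)$; the bijectivity of this \emph{repeated} insertion is then established by a sieve-equivalence argument (a lemma showing that the deletion map restricted to the "bad" outputs is an injection onto the "bad" inputs, i.e.\ the pairs whose index lies in $\p W(\lambda) \setminus \p W^B(\mu,\lambda)$). This iteration is not a technicality one can wave away --- the paper proves (Theorem \ref{complexity}) that it can require $2^m$ rounds --- so your proof needs this extra layer, or some substitute for it, before the two restricted enumerators can be matched.
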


The theorem is trivially true for $\mu \not\subseteq \lambda$, as then both sides of both equations are equal to $0$.

\begin{example}
 For $\lambda = 432$ and $\mu = 2$, we have the identities
 \begin{multline*}
 (x_1+x_2)(x_1 (x_1 + x_2) + x_1 (x_2 + x_3) + x_1 (x_3 + x_4) + x_2 (x_2 + x_3) + x_2 (x_3 + x_4) + x_3 (x_3 + x_4)) \\
 = x_1 (x_1 + x_2) (x_1 + x_3) + x_1 (x_1 + x_2) (x_2 + x_4) +  x_1 (x_2 + x_3) (x_2 + x_4) + x_2 (x_2 + x_3) (x_2 + x_4) \\ +  x_1 (x_1 + x_2) x_2 + x_1 (x_1 + x_2) x_3 +  x_1 (x_2 + x_3) x_3 + x_2 (x_2 + x_3) x_3\end{multline*}
 and
 \begin{multline*}
 (x_1+x_2) \left( (x_1 + x_2) (x_1 + x_3) + (x_1 + x_2) (x_2 + x_4) + (x_1 + x_2) (x_3 + x_5) + (x_3 + x_4) (x_3 + x_5) \right) \\
  = (x_1 + x_2) (x_1 + x_3) (x_2 + x_3) + (x_1 + x_2) (x_1 + x_3) (x_1 + x_4)\\
  + (x_1 + x_2) (x_1 + x_3) (x_2 + x_5) + (x_1 + x_2) (x_2 + x_4) (x_2 + x_5).
 \end{multline*}

 For $\lambda = 865321$ and $\mu = 431$, the first term on the left-hand side of the first equality is $x_1 + x_2 + x_3 + x_5 + x_7$, the second term is a sum of 4,992 monomials, and the right-hand side is a sum of 24,960 monomials. The first term on the left of the second equality is $x_1 + x_2 + x_3 + x_5 + x_7 + x_8$, the second term is a sum of 9,472 monomials, and the right-hand side is a sum of 56,832 monomials.
\end{example}

The (bijective) proof of Theorem \ref{thm1} is the content of the next section.

\section{The bijections} \label{bijection}

We interpret the two sides of equations \eqref{eqmainB} and \eqref{eqmainD} in terms of certain tableaux. To motivate the definition, look at the following excited diagrams of type B and D for $\lambda = 865321$ and $\mu = 431$:

\medskip

\begin{center}
   \ytableausetup{smalltableaux}
 \ytableausetup{aligntableaux=top}
 \begin{ytableau}
 *(gray!70) & *(gray!70) &  &  &  &  &  & \\
 \none & &  & *(gray!70) & *(gray!70) & &\\
 \none & \none & *(gray!70) & *(gray!70) & & & \\
 \none & \none & \none & & & *(gray!70) \\
 \none & \none & \none & \none & *(gray!70) & \\
 \none & \none & \none & \none & \none & \\
 \end{ytableau}
 \qquad\qquad
 \begin{ytableau}
 *(gray!70) & *(gray!70) &  &  &  &  &  & \\
 \none & *(gray!70)&  & *(gray!70) & *(gray!70) & &\\
 \none & \none & & *(gray!70) & & & \\
 \none & \none & \none & & & *(gray!70) \\
 \none & \none & \none & \none & *(gray!70) & \\
 \none & \none & \none & \none & \none & \\
 \end{ytableau}
\end{center}

\medskip

Instead of actually moving the cells of $\mu$, write an integer in a cell of $\mu$ that indicates how many times it moves (diagonally) from the original position. For the above example, we get the following tableaux of shape $\mu = 431$:

\medskip

\begin{center}
   \ytableausetup{boxsize=1.25em}
 \ytableausetup{aligntableaux=top}
\begin{ytableau}
 0 & 0 & 1 & 1 \\
 \none & 1 & 1 & 2 \\
 \none & \none & 2
 \end{ytableau}
 \qquad\qquad
 \begin{ytableau}
 0 & 0 & 1 & 1 \\
 \none & 0 & 1 & 2 \\
 \none & \none & 2
 \end{ytableau}
\end{center}

\medskip

Rules for excited moves mean that the non-negative integer entries of the resulting tableau are weakly increasing along rows and columns. In type D, the entries on the diagonal must be even. Also, every tableau with non-negative integer entries and weakly increasing rows and columns corresponds to a valid excited diagram, provided that (in type D only) the entries on the diagonal are even, and the entry $r$ in row $i$ and column $j$ satisfies $j \leq \lambda_{i+r} + i - 1$ in type B and $j \leq \lambda_{i+r} + i$ in type D. Furthermore, it is enough to check this inequality only for the corners of $\mu$.

\medskip

The contributions
$$ \prod_{(i,i) \in D} x_i \prod_{\substack{(i,j) \in D \\ i < j}} (x_i + x_j), \qquad \prod_{(i,j) \in D} (x_i + x_j),$$
of type B/D excited diagram $D$ can be written as
$$ \prod_{(i,i) \in |\mu]^B} x_{i+T_{ii}} \prod_{\substack{(i,j) \in [\mu]^B \\ i < j}} (x_{i+T_{ij}} + x_{j+T_{ij}}), \qquad \prod_{(i,j) \in [\mu]^D} (x_{i+T_{ij}} + x_{j+T_{ij}}),$$
where $T$ is the corresponding tableau of shape $\mu$ with non-negative integer entries and weakly increasing rows and columns. To extract the monomials from the product, choose either $x_{i+T_{ij}}$ or $x_{j+T_{ij}}$ for each $(i,j)$ in $[\mu]^B$ or $[\mu]^D$, $i < j$, and multiply with $\prod_{(i,i) \in |\mu]^B} x_{i+T_{ii}}$ in type B. Write the number $T_{ij}$ in position $(i,j)$, $i < j$, in black if we choose $x_{i+T_{ij}}$, and in red if we choose $x_{j+T_{ij}}$. The number in $(i,i)$ in type B is always black. Call a tableau with non-negative integer black or red entries (red entries not allowed on the diagonal in type B, and odd entries not allowed on the diagonal in type D) and weakly increasing rows and columns a \emph{(shifted) bicolored tableau} of type B/D. Denote by $\p B^B(\mu)$ and $\p B^D(\mu)$ the (infinite unless $\mu = \emptyset$) set of shifted bicolored tableaux of type B/D and shape $\mu$, and denote by $\p B^B(\mu,\lambda)$ and $\p B^D(\mu,\lambda)$ the (finite) set of shifted bicolored tableaux $T$ of type B/D and shape $\mu$ that satisfy $j \leq \lambda_{i+T_{ij}} + i - 1$ in type B and $j \leq \lambda_{i+T_{ij}} + i$ in type D.

\medskip

The weight of a shifted bicolored tableau $T$ of shape $\mu$ is
\begin{align*}
 w^B(T) &= \prod_{(i,j) \in b(T)} x_{i + T_{ij}} \prod_{(i,j) \in [\mu]^B \setminus b(T)} x_{j + T_{ij}}\\
 w^D(T) &= \prod_{(i,j) \in b(T)} x_{i + T_{ij}} \prod_{(i,j) \in [\mu]^D \setminus b(T)} x_{j + T_{ij}}
\end{align*}
where $b(T)$ is the set of black entries of $T$.

\begin{example}
 The following are some bicolored tableaux in $\p B^B(431)$. A bicolored tableau is in $\p B^B(431,865321)$ if and only if $T_{25} \leq 2$, $T_{34} \leq 2$, so the first two are in $\p B^B(431,865321)$ and the third is not.

 \medskip

 \begin{center}
   \ytableausetup{boxsize=1.25em}
 \ytableausetup{aligntableaux=top}
 \ytableaushort
{0{\rd 0}0{\rd 1},{\none}001,{\none}{\none}{0}}
\qquad \qquad \ytableaushort
{{0}{\rd 0}11,{\none}11{\rd 2},{\none}{\none}2}
\qquad \qquad \ytableaushort
{{0}{\rd 0}{\rd 0}{\rd 2},{\none}{0}{1}{3},{\none}{\none}{2}}
\end{center}

\medskip

\noindent
The weights of these tableaux are $x_1^2x_2^3x_3^2x_5$, $x_1x_2^3x_3^2x_5x_6$, and $x_1x_2^2x_3^2x_5^2x_6$, respectively.\\
The following are some bicolored tableaux in $\p B^D(431)$. A bicolored tableau is in $\p B^D(431,865321)$ if and only if $T_{26} \leq 2$, $T_{35} \leq 2$, so the first two are in $\p B^D(431,765521)$ and the third is not.

 \medskip

 \begin{center}
   \ytableausetup{boxsize=1.25em}
 \ytableausetup{aligntableaux=top}
 \ytableaushort
{0{\rd 0}0{\rd 1},{\none}001,{\none}{\none}{\rd 0}}
\qquad \qquad \ytableaushort
{{\rd 0}{\rd 0}11,{\none}01{\rd 2},{\none}{\none}2}
\qquad \qquad \ytableaushort
{{0}{\rd 0}{\rd 0}{\rd 2},{\none}{0}{1}{3},{\none}{\none}{2}}
\end{center}

\medskip

\noindent
The weights of these tableaux are $x_1^2x_2^2x_3^2x_4x_6$, $x_2^4x_3^2x_5x_7$, and $x_1x_2x_3^2x_4x_5^2x_7$, respectively.
\end{example}

\medskip

The left-hand sides of equations \eqref{eqmainB} and \eqref{eqmainD} are the enumerators (with respect to weight $w$) of $\p B^B(\mu,\lambda) \times \p W^B(\mu,\lambda)$ and $\p B^D(\mu,\lambda) \times \p W^D(\mu,\lambda)$. The right-hand sides are the enumerators of the sets $\bigcup_{\nu} \p B^B(\nu,\lambda)$ and $\bigcup_{\nu} \p B^D(\nu,\lambda)$, where the union is over all strict partitions $\nu$ that cover $\mu$ and are contained in $\lambda$. In the remainder of this section, we present a weight-preserving bijection between the two sides in both cases.

\medskip

Let us first describe the \emph{bump of type B}. This is an algorithm that takes as input a shifted bicolored tableau $S$ of shape $\mu$, a cell $(i,j) \in [\mu] \cup \{(0,0)\}$, a direction $d \in \{ \downarrow, \rightarrow\}$, and a variable index $k$. Except when $i = j = 0$, we assume that $k$ can be in position $(i,j)$, either as a black $k-i$ if $d = \: \rightarrow$ or a red $k-j$ if $d = \: \downarrow$. As output, it produces a new tableau $S'$ (that differs from $S$ in at most one cell), a cell $(i',j')$ (which is either one column to the right or one row below $(i,j)$), a new direction $d' \in \{\downarrow, \rightarrow, \circ\}$, and a new variable index $k'$ ($\infty$ if $d' = \circ$). New direction being $\circ$ means that we must terminate the bumping process. Note that if $s = (0,0)$, $d$ will always be $\downarrow$. Without loss of generality, we can assume that $S$ fills the entire plane: write $\infty$ to the right and below the diagram, and $0$ above and to the left.

\medskip

If direction $d$ is $\rightarrow$, we move one column to the right; in other words, we set $j' = j+1$. Let $i'$ be the largest possible index so that we can insert variable $x_k$ as a black number $k-i'$ in position $(i',j')$; in other words, we must have $S_{i'-1,j'} \leq k - i' \leq S_{i'+1,j'}$, and $i'$ is the largest index with this property. Such $i'$ always exists and is at most $i$, as we will see in Section \ref{proofs}; hence $i' \leq i \leq j < j'$ and $(i',j')$ is never on the diagonal. It can happen that $(i',j')$ lies outside of $|\mu]$. In that case, $S'$ is the tableau we get if we write a black $k-i'$ in position $(i',j')$, $d' = \circ$, and $k = \infty$. If $(i',j') \in [\mu]$, $x_k$ bumps out whatever is in position $(i',j')$ as follows: $S'$ is the tableau we get if we replace the entry in position $(i',j')$ by a black $k-i'$. If $S_{i'j'}$ is black, the new direction $d'$ is $\rightarrow$, and the new variable index $k'$ is the entry in position $(i',j')$ of $S$, i.e.~$S_{i'j'} + i'$. If $S_{i'j'}$ is red, the new direction $d'$ is $\downarrow$ and the new variable index is $k' = S_{i'j'}+j'$.

\medskip

If direction $d$ is $\downarrow$, we move one row down; in other words, we set $i' = i+1$. Let $j' \geq i'$ be the largest possible index satisfying $S_{i',j'-1} \leq k - j' \leq S_{i',j'+1}$. It can happen that $(i',j')$ lies outside of $|\mu]$. In that case, $S'$ is the tableau we get if we write a red $k-j'$ in position $(i',j')$ (black $k-j' = k - i'$ if $i' = j' = \ell(\mu) +1$), $d' = \circ$, and $k = \infty$. If $(i',j') \in [\mu]$, $x_k$ bumps out whatever is in position $(i',j')$ as follows: $S'$ is the tableau we get if we replace the entry in position $(i',j')$ by a red $k-j'$ (black $k-j' = k - i'$ if $i' = j'$). If $S_{i'j'}$ is black, the new direction $d'$ is $\rightarrow$, and the new variable index $k'$ is the entry in position $(i',j')$ of $S$, i.e.~$S_{i'j'} + i'$. If $S_{i'j'}$ is red, the new direction $d'$ is $\downarrow$ and the new variable index is $k' = S_{i'j'}+j'$.

\medskip

In summary, a black number is bumped one column to the right and stays black, and a red number is bumped one row down and stays red unless it lands on the diagonal.

\medskip

The \emph{bump in type D} is very similar; now the diagonal entries can be red, but they must be even. Of course, in type D, the diagonal cell is of the form $(i,i+1)$, so a black entry in an even row gives an even index variable and a red entry in an even row gives an odd index variable (and the other way round for odd rows). So if we bump the index variable $k$ to the right or down and land on $(i,i+1)$, we write a black $k-i$ if that is an even number, and a red $k-i-1$ otherwise. Also, while red numbers are generally bumped down, a red number on the diagonal is bumped to the right. We leave out the details (but see a detailed example in Section \ref{proofs}).

\medskip

The \emph{insertion process} (in type B or D) takes as input a shifted bicolored tableau $S$ of appropriate type and shape $\mu$ and a variable index $k$. It first performs the bump (of type B or D) for $S$, $(i,j) = (0,0)$, $d = \: \downarrow$, and $k$, with output $S'$, $(i',j')$, $d'$, and $k'$. If $d' = \circ$, the insertion process returns $S'$; otherwise, it performs the bump for $S'$, $(i',j')$, $d'$, and $k'$, with output $S''$, $(i'',j'')$, $d''$ and $k''$. It continues in this way until the direction is $\circ$.

\medskip

Finally, the \emph{repeated insertion process} (in type B or D) takes as input a bicolored tableau in $\p B^B(\mu,\lambda)$ or $\p B^D(\mu,\lambda)$ and a variable index $k$ in $\p W^B(\mu,\lambda)$ or $\p W^D(\mu,\lambda)$. It performs the insertion process with $S$ and $k$. If the resulting tableau $S'$ is in $\p B(\lambda,\nu)$ for $\nu$, $\mu \lessdot \nu \subseteq \lambda$, the repeated insertion process outputs $S'$. Otherwise, let $k'$ be the variable index represented by the entry in $[\nu] \setminus [\mu]$. Perform the insertion process with $S'$ and $k'$ as inputs. If the resulting tableau $S''$ is in $\p B(\lambda,\nu)$ for $\nu$, $\mu \lessdot \nu \subseteq \lambda$, the repeated insertion process outputs $S''$. Otherwise, we continue in this manner.

\medskip

It is not obvious that the bump, the insertion process, and the repeated insertion process are well defined. However, we will sketch the proof of the following theorem in Section \ref{proofs}.

\begin{theorem} \label{welldefined}
 The indices $i'$, $j'$ and $k'$ in the bump of type B or D are well defined, and $S'$ is a well-defined bicolored tableau.\\
 The insertion processes of type B and D described above always terminate and are weight-preserving bijections
 $$\psi^B_\mu \colon \p B^B(\mu) \times \{x_1,x_2,x_3,\ldots \} \longrightarrow \bigcup_{\nu} \p B^B(\nu),$$
 $$\psi^D_\mu \colon \p B^D(\mu) \times \{x_1,x_2,x_3,\ldots \} \longrightarrow \bigcup_{\nu} \p B^D(\nu),$$
 where the unions are over all strict partitions $\nu$ which cover $\mu$.\\
 The repeated insertion processes of type B and D described above always terminate and are weight-preserving bijections
 $$\Psi^B_{\mu,\lambda} \colon \p B^B(\mu,\lambda) \times \p W^B(\mu,\lambda) \longrightarrow \bigcup_{\nu} \p B^B(\nu,\lambda),$$
 $$\Psi^D_{\mu,\lambda} \colon \p B^D(\mu,\lambda) \times \p W^D(\mu,\lambda) \longrightarrow \bigcup_{\nu} \p B^D(\nu,\lambda),$$
 where the unions are over all strict partitions $\nu$ which cover $\mu$ and are contained in $\lambda$.
\end{theorem}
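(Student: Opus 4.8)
The plan is to prove Theorem~\ref{welldefined} in three layers, mirroring the three-tiered structure of the statement (bump, insertion process, repeated insertion process), and in each layer to establish well-definedness first and the bijection property second by constructing an explicit inverse. The key organizing principle is that a single bump is a local, reversible operation, so the whole edifice reduces to bookkeeping once the right invariants are identified.

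First I would analyze one \emph{bump of type B} in isolation. The only thing that needs proof here is that the index $i'$ (in the $\rightarrow$ case) or $j'$ (in the $\downarrow$ case) exists, is unique as ``the largest'' such index, satisfies $i' \le i$ (resp.\ $j' \ge i'$), and that after the replacement the resulting array $S'$ still has weakly increasing rows and columns with the colouring constraints (no red on the diagonal in type B; diagonal entries even in type D). This is where I expect the bulk of the careful-but-routine work to sit: one must check, for the column-to-the-right case, that the constraint chain $S_{i'-1,j'} \le k-i' \le S_{i'+1,j'}$ is compatible with the constraints coming from the horizontal neighbours $S_{i',j'-1}$ and (if $(i',j')$ lands inside $[\mu]$) the value being bumped out, using the hypothesis that $k$ was legally placeable in position $(i,j)$ as a black $k-i$; and symmetrically for the row-below case, with extra attention to the diagonal cell, where a red index landing on $(i,i)$ is recoloured black (and in type D the parity juggling must be checked). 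The bound $i' \le i$ is the monotonicity fact the paper itself flags as ``we will see in Section~\ref{proofs}'', and it is what guarantees the diagonal is never hit from the $\rightarrow$ direction and that the process drifts consistently toward the corner of $\mu$.

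Next, for the \emph{insertion process}, I would prove termination by exhibiting a monotone quantity: each bump moves the active cell strictly to the right or strictly down, and the sequence of active cells is weakly monotone in a way that cannot cycle, so after finitely many steps the active cell exits $[\mu]$ and $d' = \circ$. The output tableau $S'$ has exactly one more cell than $\mu$ — the final cell written outside $[\mu]$ — and one checks that this new cell is an outer corner of $\mu$, so $S'$ is a bicolored tableau of some shape $\nu$ with $\mu \lessdot \nu$; conversely every such $\nu$ arises, and the map is weight-preserving because each bump replaces $x_k$ (the incoming index) by $x_{k'}$ (the bumped-out index) while leaving the actual multiset of written indices unchanged except for the net effect of adding the new cell. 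To get the bijection I would describe the inverse: given $S' \in \p B^B(\nu)$ with distinguished new cell, read off its colour to set the initial direction, run the bumps \emph{backwards} (a black entry in column $j'$ came from the left, a red entry in row $i'$ came from above), and terminate when the reverse bump reaches row/column $0$, emitting the variable index $k$. Showing the reverse bump is well defined and genuinely inverts the forward bump is the crux, and it rests on the same monotonicity lemma from the first layer.

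Finally, for the \emph{repeated insertion process} and the passage from $\{x_1, x_2, \ldots\}$ to the finite set $\p W^B(\mu,\lambda)$ (resp.\ $\p W^D$), I would argue that each round of the insertion process either produces a tableau already in $\p B^B(\nu,\lambda)$ for some $\mu \lessdot \nu \subseteq \lambda$ — in which case we stop — or produces a tableau whose new cell violates the bound $j \le \lambda_{i+T_{ij}} + i - 1$, and in that case feeding the offending index back in strictly decreases some natural statistic (for instance, the violating index $k'$ is strictly smaller than $k$, or the new cell lies strictly inside a bounded region), so the loop terminates. The role of $\p W^B(\mu,\lambda)$ is precisely Lemma~\ref{w}: it is the index set for which the linear factor equals $|\lambda|-|\mu|$, and one must check that the repeated process lands exactly in $\bigcup_\nu \p B^B(\nu,\lambda)$ and that running the composite backwards returns a pair in $\p B^B(\mu,\lambda) \times \p W^B(\mu,\lambda)$ rather than some larger set — i.e.\ that the emitted index $k$ is never in a ``forbidden'' position relative to $\lambda$. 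I expect this last bound-tracking — verifying that the bijection restricts correctly to the finite $\lambda$-bounded sets — to be the main obstacle, since it is the only place where the shape $\lambda$ (as opposed to just $\mu$) enters, and it is genuinely different from the straight-shape argument of \cite{kon:naruse} because of the diagonal excited moves in type D and the monomial (rather than binomial) contributions of diagonal cells in type B. The type D case throughout requires parallel but slightly more delicate arguments near the diagonal, which I would handle by the same reverse-bump bookkeeping, tracking parities; I would present type B in full and indicate the type D modifications, as the paper's style section promises.
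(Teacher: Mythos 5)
Your three-layer architecture (bump well-definedness, then insertion via an explicit inverse, then restriction to the $\lambda$-bounded sets) matches the paper's, and your treatment of the bump itself and of the inverse insertion is essentially what the paper does. However, both of your termination arguments contain genuine gaps. For the single insertion process, the claim that ``the sequence of active cells is weakly monotone in a way that cannot cycle'' is not a valid positional argument: a rightward bump sends $(i,j)$ to $(i',j+1)$ with $i'$ possibly much smaller than $i$, and a downward bump sends it to $(i+1,j')$ with $j'$ possibly much smaller than $j$, so neither $i+j$ nor $j-i$ nor any purely positional statistic is monotone, and the path can in principle revisit regions it has left. The paper's termination argument instead tracks the quantity $k+j$ when $d=\,\rightarrow$ and $k+i$ when $d=\,\downarrow$; this is strictly increasing because the bumped-out variable index satisfies $k'\geq k$, and it is bounded above by $\ell(\mu)+\mu_1+\max T_{ij}$. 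The weakly increasing index $k$ is the missing ingredient in your argument.

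The gap in your third layer is more serious. You propose that each failed round of the repeated insertion ``strictly decreases some natural statistic (for instance, the violating index $k'$ is strictly smaller than $k$).'' No such statistic can exist in the form you suggest: Theorem \ref{complexity} exhibits inputs of size $O(m^2)$ on which the repeated insertion needs $2^m$ rounds, while the variable indices that can occur are bounded by $\lambda_1+1=m+3$, so the re-inserted index cannot be strictly decreasing (and indeed in the worked example it oscillates). The paper's actual mechanism is a sieve-equivalence argument imported from \cite{kon:naruse}: one sets $A=\p B^B(\mu,\lambda)\times\p W(\lambda)$, $B=\psi_\mu(A)$, $X=\p B^B(\mu,\lambda)\times\p W^B(\mu,\lambda)$, $Y=\bigcup_\nu\p B^B(\nu,\lambda)$, and proves that the deletion map $\varphi_\mu$ restricts to an injection $B\setminus Y\to A\setminus X$. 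Termination and bijectivity of the iterated map $X\to Y$ then follow simultaneously from finiteness of $A$ and injectivity of each round --- not from any monotone statistic. Your instinct that the $\lambda$-bound tracking is ``the main obstacle'' is right, but the tool needed there is this injection lemma, not a descent argument.
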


This theorem proves \eqref{eqmainB} and \eqref{eqmainD} and hence the hook-length formulas for skew shapes, equations \eqref{eqB} and \eqref{eqD}.

\medskip

While both the bump and the insertion process are efficient, repeated insertion can be very slow. Indeed, in Section \ref{proofs}, we prove the following.

\begin{theorem} \label{complexity}
 For every positive integer $m$, there exists a shifted bicolored tableau $S$ of type B and shape $\mu = (m+2,m,m-1,m-2\ldots,1)$ so that repeated insertion of $2$ into $S$ with respect to $\lambda = (m+2,m+1,m-1,m-2\ldots,1)$ needs $2^m$ insertions.
\end{theorem}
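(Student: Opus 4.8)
The strategy is to exhibit an explicit tableau $S$ whose repeated insertion unfolds like a binary counter, so that the number of insertions doubles with each increment of $m$. First I would pin down what the repeated insertion does for the particular shapes $\mu=(m+2,m,m-1,\ldots,1)$ and $\lambda=(m+2,m+1,m-1,\ldots,1)$: note that $\lambda/\mu$ consists of a single cell, namely the cell added to row $2$ (extending row $2$ from length $m$ to length $m+1$), so an element of $\bigcup_\nu \p B^B(\nu,\lambda)$ records exactly one excited cell, and the repeated insertion terminates precisely when the inserted "mass" has migrated to that slot. The near-staircase shape forces the excited moves to be almost rigid: in a staircase, no excited move is possible at all, and our $\mu$ differs from the staircase only by the extra cell $(1,m+3)$ at the end of row $1$, so the only room for diagonal sliding comes from that single extra column of slack in the top row. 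This near-rigidity is what lets us control the computation exactly.

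\textbf{Key steps.} (1) Identify the unique "room to move": I would check that in $[\mu]^B$ the only cell admitting an excited move is the last cell of row $1$, and more generally that after any sequence of moves the configuration is described by a single statistic — how far down a diagonal chain of cells has slid — giving a totally ordered, essentially one-dimensional family of intermediate diagrams $D_0 \subsetneq D_1 \subsetneq \cdots$. (2) Recursive structure: I would set up $S = S_m$ recursively in $m$, so that row $i$ of $S_m$ encodes (a shifted copy of) the state of the length-$(m-1)$ problem; concretely the colors and entries along each successive diagonal replicate the data of $S_{m-1}$. (3) Trace one insertion: starting from $(0,0)$ with $d=\downarrow$ and $k=2$, the bump of $2$ enters at the diagonal cell $(1,1)$ (since $2 - 1 = 1$, the largest admissible black value there given the staircase constraints), then propagates. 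Because black entries move right and red entries move down, and because the staircase leaves no vertical slack except as engineered by the recursion, each insertion makes progress that exactly mirrors one increment of an $m$-bit binary counter: most insertions do a small local rearrangement (a "carry" of length one), but periodically a long cascade sweeps across many rows. (4) Count: summing the cascade lengths over a full run, the total number of insertions needed to move from $S_m$ to the terminal tableau in $\p B^B(\nu,\lambda)$ (with $\nu$ the unique shape covering $\mu$ inside $\lambda$) is $\sum_{j=0}^{m-1} 2^j \cdot (\text{const})$ — I would arrange the constants so this is exactly $2^m$, or absorb any constant factor into the $2^{\Omega(\sqrt n)}$ bound of Theorem \ref{complexity} (since $|\mu| = \binom{m+1}{2} + 2 = \Theta(m^2)$, so $m = \Theta(\sqrt{n})$).

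\textbf{Main obstacle.} The delicate part is step (3): verifying that the bump trajectory really does behave like a binary-counter carry chain. This requires a careful invariant — something like "after $t$ insertions the tableau $S^{(t)}$ equals $S_m$ with the first $t$ (in binary-counter order) diagonal-configurations advanced" — and checking that the bump's choice of the \emph{largest} admissible row/column index $i'$ or $j'$ at each step does the right thing at the boundary between the "already counted" and "not yet counted" portions. I expect the bookkeeping of colors (which entries are black vs. red, hence bumped right vs. down) to be the fussiest point, since a single miscolored cell would break the doubling. I would handle this by induction on $m$, reducing the analysis of a long cascade in $S_m$ to a completed run in $S_{m-1}$ together with one extra "top-level carry," and by working out the base cases $m=1,2$ explicitly to anchor the recursion; a worked example (which the paper defers to Section \ref{proofs}) makes the pattern transparent.
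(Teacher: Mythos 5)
Your high-level strategy --- an explicit witness tableau whose repeated insertion simulates an $m$-bit binary counter --- is exactly the paper's, but the plan as written has a genuine gap and two concrete missteps that would derail the execution. First, step (1) misidentifies where the $2^m$ states live. For $\mu=(m+2,m,m-1,\ldots,1)$ and $\lambda=(m+2,m+1,m-1,\ldots,1)$ the set $[\lambda]^B\setminus[\mu]^B$ is the single cell $(2,m+2)$, so no excited move is ever available (a move needs three empty cells of $[\lambda]^B$), $\p E^B(\lambda/\mu)$ consists of $[\mu]^B$ alone, and every intermediate tableau has all entries equal to $0$. There is no ``totally ordered one-dimensional family of intermediate diagrams''; the exponentially many states are distinguished purely by the black/red \emph{coloring} of the off-diagonal zeros, and the counter is a counter of colorings. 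Second, the trace in step (3) is wrong: inserting $k=2$ from $(0,0)$ with $d=\downarrow$ lands at $(1,2)$ as a red $0$ (the candidate $j'=1$ fails because $2-1=1\not\le S_{1,2}=0$), not at the diagonal cell $(1,1)$; since you yourself note that a single miscolored or misplaced cell breaks the doubling, this is not a harmless slip. Third, step (4) conflates the lengths of bump paths within one insertion with the number of insertions; the theorem counts iterations of the repeated insertion process, and the exact value $2^m$ is the assertion, so there is no constant to ``absorb.''

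The substantive content that is missing is the witness itself. The paper does not induct on $m$: it writes down a closed-form family $S^{m,n}$, $0\le n<2^m$, giving the color of every entry of rows $1$, $2$ and $i\ge 3$ explicitly in terms of the binary expansion of $n$, together with the variable index $k_n$ re-inserted at step $n$, and then checks that one insertion carries $(S^{m,n},k_n)$ to $(S^{m,n+1},k_{n+1})$, terminating exactly at $n=2^m$ (even this check is left to the reader as ``tedious book-keeping''). Your proposal defers precisely this construction --- ``I would set up $S=S_m$ recursively \ldots the colors \ldots replicate the data of $S_{m-1}$'' --- without specifying the recursion or the invariant, so essentially the entire proof remains to be done. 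A recursive description of the counter could well be made to work and might be cleaner than the paper's explicit indexing, but as it stands the proposal is a research plan, and its concrete claims about the mechanism (excited-diagram slack in row $1$, entry of the bump at $(1,1)$) are inconsistent with how the algorithm actually behaves on these shapes.
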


It is worthwhile to consider what happens when $\ell(\lambda) = \ell(\mu)$. In that case, $\lambda/\mu$ can be interpreted as a skew non-shifted shape, and we can compute $f^{\lambda/\mu}$ using Naruse's formula for straight shapes. Of course, the formulas \eqref{eq2B} and \eqref{eq2D} give an equivalent result: we can have no excited moves on or above the diagonal, so the contribution of these cells on both sides can be canceled.
\ref{w}
\section{A weighted generalization of the hook-length formula} \label{weighted}

After reading \cite{kon:naruse}, Darij Grinberg (personal communication) pointed out that a certain weighted gen\-er\-al\-i\-za\-tion (\cite[Theorem 18]{hopkins}) of the hook-length formula has a Naruse-style extension to skew shapes, and that it can be proved by taking a different specialization of the main polynomial identity from \cite{kon:naruse}. It turns out that the skew shifted shapes also allow for this generalization.

\medskip

Recall that the \emph{content} of a cell $(i,j)$ in a Young diagram of a strict partition $\lambda$ is the integer $j-i$, which we denote $c_\lambda(i,j)$ (or simply $c(i,j)$ if it is clear what partition we are considering). Note that in type B, the diagonal cells have content $0$, while in type D, they have content $1$. Let $z_k$, $k \in \Z_{\geq 0}$, be some commutative variables. Define weighted hook lengths $h^B(u;{\bf z})$ and $h^D(u;{\bf z})$ of a cell $u$ in a diagram of $\lambda$ of type B/D as the sum of $z_{c_\lambda(i,j)}$ over all cells $(i,j)$ in the type B/D hook of $u$. For example, the weighted hook lengths of the cells $(2,2)$, $(1,4)$ and $(2,5)$ in $[6532]^B$ are $z_0 + z_1 + z_2 + z_3 + z_4 + z_5$, $2z_0 + 2z_1 + z_2 + z_3 + z_4 + z_5$, and $z_1 + z_2 + z_3 + z_4$. On the other hand, the weighted hook lengths of the cells $(1,4)$ and $(2,5)$ in $[6532]^D$ are $2z_1 + 2z_2 + z_3 + z_4 + z_5 + z_6$ and $z_1 + z_2 + z_3 + z_4 + z_5$. Of course, if $z_k = 1$ for all $k$, the weighted hook length becomes the usual hook length.

\medskip

Furthermore, for a standard Young tableau $T$ of skew shifted shape $\lambda/\mu$, $|\lambda|-|\mu| = n$, define
$$T^B_{\bf z} = \prod_{k=1}^n \frac 1{z_{c_\lambda(T^{-1}(n))} + z_{c_\lambda(T^{-1}(n-1))} + \cdots + z_{c_\lambda(T^{-1}(k))}},$$
where $T^{-1}(k)$ is the unique cell of the tableau with entry $k$, and we take contents in type B. We define $T^D_{\bf z}$ analogously (and we obtain it from $T^B_{\bf z}$by shifting all indices by $1$). If $z_k = 1$ for all $k$, $T^B_{\bf z} = T^D_{\bf z} = 1/n!$. For example, if $T$ is the tableau

$$\begin{ytableau}
 *(gray!70) & *(gray!70) & 2 & 3 \\
\none & 1 & 4 & 5 \\
\none & \none & 6 & 7
\end{ytableau}$$

then

$$T^B_{\bf z} = \frac 1{(2z_0+2z_1+2z_2+z_3)(z_0+2z_1+2z_2+z_3)(z_0+2z_1+z_2+z_3)(z_0+2z_1+z_2)(z_0+z_1+z_2)(z_0 + z_1)z_1},$$
$$T^D_{\bf z} = \frac 1{(2z_1+2z_2+2z_3+z_4)(z_1+2z_2+2z_3+z_4)(z_1+2z_2+z_3+z_4)(z_1+2z_2+z_3)(z_1+z_2+z_3)(z_1 + z_2)z_2}.$$

\begin{theorem} \label{z}
  For a strict partition $\lambda$, we have
  \begin{align}
   \sum_{T \in \SYT(\lambda/\mu)} T^B_{\bf z} &= \sum_{D \in \p E^B(\lambda/\mu)} \prod_{u \in [\lambda]^B \setminus D} \frac 1{h^B(u;\bf z)} = \frac{1}{\prod_{u \in [\lambda]^B } h^B(u;\bf z)} \sum_{D \in \p E^B(\lambda/\mu)} \prod_{u \in  D} h^B(u;\bf z) \\
   \sum_{T \in \SYT(\lambda/\mu)} T^D_{\bf z}&= \sum_{D \in \p E^D(\lambda/\mu)} \prod_{u \in [\lambda]^D \setminus D} \frac 1{h^D(u;\bf z)} = \frac{1}{\prod_{u \in [\lambda]^D } h^D(u;\bf z)} \sum_{D \in \p E^D(\lambda/\mu)} \prod_{u \in  D} h^D(u;\bf z)
  \end{align}
\end{theorem}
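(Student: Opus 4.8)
The plan is to deduce Theorem~\ref{z} from the \emph{same} combinatorial machinery that proves Theorem~\ref{thm1}, by performing a different specialization of the variables $x_i$. Recall that in the unweighted case one sets $x_i = x_i^B$ (resp.\ $x_i^D$), uses the hook identities \eqref{hooks1}--\eqref{hooks3} to recognize $\prod_{u\in D} h^B(u)$ as $\prod_{(i,i)\in D} x_i \prod_{(i,j)\in D, i<j}(x_i+x_j)$, and uses Lemma~\ref{w} to recognize $|\lambda|-|\mu|$ as $\sum_{k\in\p W^B(\mu,\lambda)} x_k$; this turns Theorem~\ref{thm1} into the recursions \eqref{eq2B}, \eqref{eq2D}, which (after dividing by $\prod_{u\in[\lambda]}h^B(u)$) match the recursion $f^{\lambda/\mu}=\sum_{\mu\lessdot\nu\subseteq\lambda}f^{\lambda/\nu}$. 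The idea here is that all of this goes through verbatim with $z$-weighted hook lengths, once we choose the right \emph{$z$-analogue} of the numbers $x_i^B, x_i^D$.

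Concretely, I would introduce variables $X_i^B$ defined so that the type~B hook of a diagonal cell $(i,i)$ has $z$-weight $X_i^B$ and the hook of an off-diagonal cell $(i,j)$, $i<j$, has $z$-weight $X_i^B + X_j^B$. Writing row $i$ of $[\lambda]^B$ as the cells of contents $0,1,\ldots,\lambda_i-1$, the diagonal hook weight is $z_0+z_1+\cdots+z_{\lambda_i-1}$, so one is forced to take $X_i^B = z_0+z_1+\cdots+z_{\lambda_i-1}$ for $i\le\ell(\lambda)$; for $i>\ell(\lambda)$ one takes the ``negative'' tail, $X_i^B = -(z_{-1}+z_{-2}+\cdots+z_{-(i-\lambda_i')})$ read off the column, exactly paralleling Definition~\ref{xi}. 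The content shift by $1$ gives the type~D version $X_i^D$. The first step is then to check the weighted hook identities $h^B(i,i;\mathbf z)=X_i^B$, $h^B(i,j;\mathbf z)=X_i^B+X_j^B$, $h^D(i,j;\mathbf z)=X_i^D+X_j^D$ — a routine contents bookkeeping, since the type~B hook of $(i,j)$ is ``row $i$ plus row $j$ with $(j,j)$ double-counted'', whose contents are exactly $\{0,\ldots,\lambda_i-1\}\cup\{0,\ldots,\lambda_j-1\}$. The second step is a weighted analogue of Lemma~\ref{w}: I claim $\sum_{k\in\p W^B(\mu,\lambda)} X_k^B = \sum_{u\in[\lambda/\mu]^B} z_{c_\lambda(u)}$ (a ``$z$-size'' of $\lambda/\mu$), and likewise in type~D. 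The proof is the same telescoping argument as in Lemma~\ref{w}: the map $j\mapsto X^B$-indices realizes $\{\lambda_i\}$ as the complement of $\{-X_k^B/z\text{-supports}\}$, and subtracting the $\mu$-version picks out precisely the ``new'' rows/columns, i.e.\ the cells of $\lambda/\mu$, each weighted by $z_{\text{content}}$.

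With these two ingredients, substituting $x_i = X_i^B$ into Theorem~\ref{thm1} yields
\[
\Big(\sum_{u\in[\lambda/\mu]^B} z_{c_\lambda(u)}\Big)\sum_{D\in\p E^B(\lambda/\mu)}\prod_{u\in D} h^B(u;\mathbf z)
= \sum_{\mu\lessdot\nu\subseteq\lambda}\sum_{D\in\p E^B(\lambda/\nu)}\prod_{u\in D} h^B(u;\mathbf z),
\]
and dividing by $\prod_{u\in[\lambda]^B} h^B(u;\mathbf z)$ gives a recursion for $g^B(\lambda/\mu):=\sum_{D\in\p E^B(\lambda/\mu)}\prod_{u\in[\lambda]^B\setminus D} 1/h^B(u;\mathbf z)$ with the same base case $g^B(\lambda/\lambda)=1$. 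The final step is to check that $\sum_{T\in\SYT(\lambda/\mu)} T^B_{\mathbf z}$ obeys the \emph{same} recursion: removing the cell $u_0=T^{-1}(1)$, which must sit in an outer corner of $\mu$ inside $\lambda$ (so $\mu\lessdot\nu\subseteq\lambda$ with $\nu=\mu\cup\{u_0\}$), factors $T^B_{\mathbf z}$ as $\frac{1}{\sum_{u\in[\lambda/\mu]^B} z_{c_\lambda(u)}}\cdot (T\!\setminus\! u_0)^B_{\mathbf z}$ because the $k=1$ denominator is exactly the full $z$-content sum of $\lambda/\mu$; summing over $T$ and over the choices of $\nu$ gives $\sum_T T^B_{\mathbf z} = \frac{1}{\sum_{u} z_{c_\lambda(u)}}\sum_{\mu\lessdot\nu\subseteq\lambda}\sum_{T'\in\SYT(\lambda/\nu)} (T')^B_{\mathbf z}$, matching the recursion for $g^B$. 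By induction on $|\lambda/\mu|$ the two agree, proving the first chain of equalities; type~D is identical after the content shift. I expect the main obstacle to be the weighted Lemma~\ref{w} — specifically, being careful about the ``negative'' variables $X_i^B$ for $i>\ell(\lambda)$ (they are genuine signed sums of the $z_{<0}$) and verifying that the cancellation in the proof of Lemma~\ref{w} still identifies the surviving terms with cells of $\lambda/\mu$ rather than merely counting them; everything else is a direct transcription of the unweighted argument.
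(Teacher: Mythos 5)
Your proposal is correct and follows essentially the same route as the paper's sketch: the paper likewise specializes Theorem~\ref{thm1} at $x_i^B(\mathbf z)=z_0+\cdots+z_{\lambda_i-1}$ for $i\le\ell(\lambda)$ and $x_i^B(\mathbf z)=-z_0-\cdots-z_{i-\lambda_i'-1}$ for $i>\ell(\lambda)$, verifies the weighted hook identities and the weighted analogue of Lemma~\ref{w}, and matches the recursion obtained from $\sum_T T^B_{\mathbf z}$ by removing the cell $T^{-1}(1)$. The one slip is your indexing of the ``negative'' variables: since all contents in $[\lambda]^B$ are nonnegative (the $z_k$ are only defined for $k\ge 0$), the correct tail for $i>\ell(\lambda)$ is $-(z_0+\cdots+z_{i-\lambda_i'-1})$, not a sum of $z$'s with negative subscripts.
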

\begin{proof}[Sketch of proof]
 We only sketch the proof for type B; type D and all details are left as an exercise for the reader. the term in $T^B_{\bf z}$ corresponding to $k = 1$ equals $1/(\sum_{u \in [\lambda/\mu]^B} z_{c_\lambda(u)}))$ and is therefore independent of the tableau $T$ (it depends only on $\lambda$ and $\mu$). That means that the equivalent recursive form of the first equality is
 $$\sum_{u \in [\lambda/\mu]^B} z_{c_\lambda(u)}) \sum_{D \in \p E^B(\lambda/\mu)} \prod_{u \in D} h^B(u;\bf z) = \sum_{\mu \lessdot \nu \subseteq \lambda} \sum_{D \in \p E^B(\lambda/\nu)} \prod_{u \in D} h^B(u;\bf z).$$
 It turns out that this is just another specialization of the first polynomial identity in Theorem \ref{thm1}.\\
 Define $x_i^B(\bf z) = z_0 + \ldots + z_{\lambda_i-1}$ for $i \leq \ell(\lambda)$ and  $x_i^B(\bf z) = - z_0 - \ldots - z_{i-\lambda'_i-1}$ for $i > \ell(\lambda)$. Note that when $z_k = 1$ for all $k$, $x_i^B(\bf z) = x_i^B$. It is easy to check that $h^B(i,i;\bf z) = x_i^B(\bf z)$ for $i \leq \ell(\lambda)$, and $h^B(i,j;\bf z) = x_i^B(\bf z) + x_j^B(\bf z)$ for $i \leq \ell(\lambda)$, $i < j \leq \lambda_i + i - 1$. Furthermore, it is a straightforward generalization of Lemma \ref{w} that
 $$\sum_{u \in [\lambda/\mu]^B} z_{c_\lambda(u)} = \sum_{k \in \p W^B(\mu,\lambda)} x^B_k(\p z).$$
 The result follows.
\end{proof}

\begin{example}
 For $\lambda = 321$ and $\mu = 1$, the theorem says that
 \begin{multline*}
   \frac{1}{z_0 (z_0+z_1) (2 z_0+z_1) (2 z_0+z_1+z_2) (2 z_0+2 z_1+z_2)} \\ + \frac{1}{z_0 (z_0+z_1) (z_0+z_1+z_2) (2 z_0+z_1+z_2) (2 z_0+2 z_1+z_2)} \\
  = \frac{1}{z_0 (z_0+z_1) (2 z_0+z_1) (z_0+z_1+z_2) (2 z_0+z_1+z_2) (2 z_0+2 z_1+z_2)} \cdot \Big((z_0+z_1+z_2) + (z_0+z_1) + z_0\Big).
 \end{multline*}
 Note that the terms on the left correspond to standard Young tableaux
 \begin{center}
 \ytableausetup{centertableaux}
 \ytableausetup{smalltableaux}
 \begin{ytableau}
  *(gray!70) & 1 & 2 \\
 \none & 3 & 4 \\
 \none & \none & 5
\end{ytableau} \qquad \mbox{and} \qquad
 \begin{ytableau}
  *(gray!70) & 1 & 3 \\
 \none & 2 & 4 \\
 \none & \none & 5
 \end{ytableau},
\end{center}
while the three terms in the sum on the right correspond to the excited diagrams
\begin{center}
\ytableausetup{smalltableaux}
 \begin{ytableau}
 *(gray!70) & & \\
  \none & & \\
  \none & \none &\\
\end{ytableau}
\qquad
\begin{ytableau}
{} & & \\
 \none & *(gray!70) & \\
 \none & \none &\\
\end{ytableau}
\qquad
\begin{ytableau}
{} & & \\
 \none & & \\
 \none & \none & *(gray!70)\\
\end{ytableau}
\qquad
\end{center}
\end{example}

Finally, let us mention that an interesting specialization of Theorem \ref{z} is $z_i = q^i$, which gives a $q$-version of the skew shifted hook-length formula.

\section{Examples and proofs} \label{proofs}

\subsection*{Examples of bumps, insertion, and repeated insertion}

As our first example in type B, take $\lambda = 865321$, $\mu = 431$, and perform repeated insertion with the tableau
$$\ytableausetup{boxsize=1.25em}
 \ytableausetup{aligntableaux=top}
 \ytableaushort
{0{\rd 0}{\rd 1}{\rd 1},{\none}{1}2{2},{\none}{\none}{2}}$$
and $k = 1$. Note that $\lambda_1 = 8$ is not a part of $\mu$, so $1$ is indeed in $\p W^B(\mu,\lambda)$.

\medskip

By the definition of the insertion process, we first take $(i,j) = (0,0)$ and $d = \: \downarrow$. Because direction is downward, we move one row down, to row $i' = 1$, and find the largest $j'$ so that the number $1-j'$ can be inserted in position $(1,j')$. Of course, the only option is $j' = 1$, and because $(1,1)$ is on the diagonal, we insert a black $0$ in that position. A black $0$ is also bumped out, so the new direction is $\rightarrow$, and the new variable index is $1$. We move one column to the right. The only possible position is $(1,2)$, where we insert a black $0$, and bump the variable $x_2$ (represented by the red $0$) downward. It lands in position $(2,2)$ and bumps out the black $1$. We move to the right. The variable $x_3$ that was bumped can either be represented as a black $2$ in position $(3,1)$ or a black $2$ in position $(3,2)$. Both are possible, so we choose the latter option. A black $2$ is bumped to the right, land in $(2,4)$ and bumps out a black $2$. This one lands in position $(1,5)$ as a black $3$. The bumps are represented in the following figure. The gray cells show the positions of bumped cells.

\medskip

\begin{center}
\ytableausetup{boxsize=1.25em}
\ytableausetup{aligntableaux=top}
\ytableaushort
{{*(gray!25) 00}{\rd 0}{\rd 1}{\rd 1},{\none}{1}2{2},{\none}{\none}{2}}
\quad \ytableaushort
{{0}{*(gray!25) 0 \rd 0}{\rd 1}{\rd 1},{\none}{1}2{2},{\none}{\none}{2}}
\quad \ytableaushort
{{0}{0}{\rd 1}{\rd 1}{\none},{\none}{*(gray!25)01}2{2},{\none}{\none}{2}}
\ytableaushort
{{0}{0}{\rd 1}{\rd 1},{\none}{0}{*(gray!25)12}{2},{\none}{\none}{2}}
\quad \ytableaushort
{{0}{0}{\rd 1}{\rd 1},{\none}{0}{1}{*(gray!25)22},{\none}{\none}{2}}
\quad \ytableaushort
{{0}{0}{\rd 1}{\rd 1}{*(gray!70) 3},{\none}{0}{1}{2},{\none}{\none}{2}}
\end{center}

\medskip

Since $(1,5) \notin [431]^B$, the insertion process is done. On the other hand, the repeated insertion process is not, since the entry in position $(1,5)$ violates condition $\leq 2$. Therefore we remove this cell, and insert the variable $x_4$ into the resulting tableau. The insertion process is represented in the following figure.

\medskip

\begin{center}
   \ytableausetup{boxsize=1.25em}
 \ytableausetup{aligntableaux=top}
 \ytableaushort
 {{0}{0}{\rd 1}{\rd 1},{\none}{0}{1}{2},{\none}{\none}{2}}
 \quad\ytableaushort
 {{0}{0}{*(gray!25) \rd 1\rd 1}{\rd 1},{\none}{0}{1}{2},{\none}{\none}{2}}
 \quad\ytableaushort
 {{0}{0}{\rd 1}{\rd 1},{\none}{0}{*(gray!25) {\rd 1}1}{2},{\none}{\none}{2}}
 \quad\ytableaushort
 {{0}{0}{\rd 1}{\rd 1},{\none}{0}{\rd 1}{*(gray!25) 12},{\none}{\none}{2}}
 \quad\ytableaushort
 {{0}{0}{\rd 1}{\rd 1}{*(gray!70) 3},{\none}{0}{\rd 1}{1},{\none}{\none}{2}}
\end{center}

\medskip

Again, the repeated insertion process is not finished, since the entry in position $(1,5)$ is $> 2$. We remove the cell, and insert $x_4$ into the remaining tableau. The insertion process is now as follows:

\medskip

\begin{center}
   \ytableausetup{boxsize=1.25em}
 \ytableausetup{aligntableaux=top}
 \ytableaushort
{{0}{0}{\rd 1}{\rd 1},{\none}{0}{\rd 1}{1},{\none}{\none}{2}}
\quad \ytableaushort
{{0}{0}{*(gray!25) \rd 1 \rd 1}{\rd 1},{\none}{0}{\rd 1}{1},{\none}{\none}{2}}
\quad \ytableaushort
{{0}{0}{\rd 1}{\rd 1},{\none}{0}{*(gray!25) \rd 1 \rd 1}{1},{\none}{\none}{2}}
\quad \ytableaushort
{{0}{0}{\rd 1}{\rd 1},{\none}{0}{\rd 1}{1},{\none}{\none}{*(gray!25) 12}}
\quad \ytableaushort
{{0}{0}{\rd 1}{\rd 1},{\none}{0}{\rd 1}{1},{\none}{\none}{1}{*(gray!70) 2}}
\end{center}

\medskip

The last shifted bicolored tableau is in $\p B^B(431,865321)$, so we are done with the repeated insertion process.

\medskip

Let us mention that if we perform the repeated insertion process on all 24,960 pairs $(S,k)$ in
$$\p B^B(431,865321) \times \p W^B(431,865321),$$
the process needs just one insertion in 17,398 cases, two insertions in 6,080 cases, three insertions in 977 cases, four insertions in 455 cases, five insertions in 25 cases, and six insertions in 25 cases.

\medskip

Based on this, one might conjecture that the number of insertions in the repeated insertion process is bounded by $|\mu|$, $\ell(\lambda)$ or something similar. The next example shows that this is not the case. Take $\lambda = 65321$, $\mu = 64321$, $k = 2$, and the tableau

\medskip

\begin{center}
  \ytableausetup{boxsize=1.25em}
 \ytableausetup{aligntableaux=top}
 \ytableaushort
{{0}{0}{0}{0}{0}, {\none}{\rd 0}{\rd 0}{\rd 0}, {\none}{\none}{0}{0}, {\none}{\none}{\none}{0}}
\end{center}

\medskip

Now we represent each insertion in one diagram, with the gray cells indicating the insertion path. We see that we need 16 insertions. In fact, we will use a generalization of this example to construct an example with $\lambda$ and $\mu$ of (approximately) size $m^2/2$ and length $m$, and $(S,k) \in \p B^B(\mu,\lambda) \times \p W^B(\mu,\lambda)$ that needs $2^m$ steps of the insertion process.

\medskip

\begin{center}
  \ytableausetup{boxsize=1.25em}
 \ytableausetup{aligntableaux=top}
 \ytableaushort
{{0}{*(gray!25) \rd 0}{*(gray!25) 0}{*(gray!25) 0}{*(gray!25) 0}{*(gray!25) 0}{*(gray!70)0}, {\none}{0}{\rd 0}{\rd 0}{\rd 0}, {\none}{\none}{0}{0}{0}, {\none}{\none}{\none}{0}{0}, {\none}{\none}{\none}{\none}{0}}
\qquad \ytableaushort
{{*(gray!25)0}{*(gray!25) 0}{0}{0}{0}{0}, {\none}{*(gray!25)0}{*(gray!25) 0}{\rd 0}{\rd 0}{*(gray!70)1}, {\none}{\none}{*(gray!25)0}{*(gray!25) 0}{*(gray!25) 0}, {\none}{\none}{\none}{0}{0}, {\none}{\none}{\none}{\none}{0}}
\qquad \ytableaushort
{{0}{0}{*(gray!25) \rd 0}{*(gray!25) 0}{*(gray!25) 0}{*(gray!25) 0}{*(gray!70)0}, {\none}{0}{0}{\rd 0}{\rd 0}, {\none}{\none}{0}{0}{0}, {\none}{\none}{\none}{0}{0}, {\none}{\none}{\none}{\none}{0}}
\qquad \ytableaushort
{{*(gray!25)0}{*(gray!25) 0}{*(gray!25) 0}{0}{0}{0}, {\none}{0}{*(gray!25) \rd 0}{*(gray!25) 0}{\rd 0}{*(gray!70)1}, {\none}{\none}{0}{*(gray!25) \rd 0}{*(gray!25) 0}, {\none}{\none}{\none}{0}{0}, {\none}{\none}{\none}{\none}{0}}

\vspace{0.3cm}
\ytableaushort
{{0}{0}{*(gray!25) \rd 0}{*(gray!25) 0}{*(gray!25) 0}{*(gray!25) 0}{*(gray!70)0}, {\none}{0}{\rd 0}{0}{\rd 0}, {\none}{\none}{0}{\rd 0}{0}, {\none}{\none}{\none}{0}{0}, {\none}{\none}{\none}{\none}{0}}
\qquad \ytableaushort
{{*(gray!25)0}{*(gray!25) 0}{*(gray!25) 0}{0}{0}{0}, {\none}{0}{*(gray!25) \rd 0}{0}{\rd 0}{*(gray!70)2}, {\none}{\none}{*(gray!25)0}{*(gray!25) 0}{0}, {\none}{\none}{\none}{*(gray!25)0}{*(gray!25) 0}, {\none}{\none}{\none}{\none}{0}}
\qquad \ytableaushort
{{0}{0}{0}{*(gray!25) \rd 0}{*(gray!25) 0}{*(gray!25) 0}{*(gray!70)0}, {\none}{0}{\rd 0}{0}{\rd 0}, {\none}{\none}{0}{0}{0}, {\none}{\none}{\none}{0}{0}, {\none}{\none}{\none}{\none}{0}}
\qquad \ytableaushort
{{*(gray!25)0}{*(gray!25) 0}{*(gray!25) 0}{*(gray!25) 0}{0}{0}, {\none}{0}{\rd 0}{*(gray!25) \rd 0}{*(gray!25) 0}{*(gray!70)1}, {\none}{\none}{0}{0}{*(gray!25) \rd 0}, {\none}{\none}{\none}{0}{0}, {\none}{\none}{\none}{\none}{0}}

\vspace{0.3cm}
\ytableaushort
{{0}{0}{*(gray!25) \rd 0}{*(gray!25) 0}{*(gray!25) 0}{*(gray!25) 0}{*(gray!70)0}, {\none}{0}{\rd 0}{\rd 0}{0}, {\none}{\none}{0}{0}{\rd 0}, {\none}{\none}{\none}{0}{0}, {\none}{\none}{\none}{\none}{0}}
\qquad \ytableaushort
{{*(gray!25)0}{*(gray!25) 0}{*(gray!25) 0}{0}{0}{0}, {\none}{0}{*(gray!25) \rd 0}{\rd 0}{0}{*(gray!70)2}, {\none}{\none}{*(gray!25)0}{*(gray!25) 0}{*(gray!25) 0}, {\none}{\none}{\none}{0}{*(gray!25) \rd 0}, {\none}{\none}{\none}{\none}{0}}
\qquad \ytableaushort
{{0}{0}{0}{*(gray!25) \rd 0}{*(gray!25) 0}{*(gray!25) 0}{*(gray!70)0}, {\none}{0}{\rd 0}{\rd 0}{0}, {\none}{\none}{0}{0}{0}, {\none}{\none}{\none}{0}{\rd 0}, {\none}{\none}{\none}{\none}{0}}
\qquad \ytableaushort
{{*(gray!25)0}{*(gray!25) 0}{*(gray!25) 0}{*(gray!25) 0}{0}{0}, {\none}{0}{\rd 0}{*(gray!25) \rd 0}{0}{*(gray!70)1}, {\none}{\none}{0}{*(gray!25) \rd 0}{*(gray!25) 0}, {\none}{\none}{\none}{0}{\rd 0}, {\none}{\none}{\none}{\none}{0}}

\vspace{0.3cm}
\ytableaushort
{{0}{0}{*(gray!25) \rd 0}{*(gray!25) 0}{*(gray!25) 0}{*(gray!25) 0}{*(gray!70)0}, {\none}{0}{\rd 0}{\rd 0}{0}, {\none}{\none}{0}{\rd 0}{0}, {\none}{\none}{\none}{0}{\rd 0}, {\none}{\none}{\none}{\none}{0}}
\qquad \ytableaushort
{{*(gray!25)0}{*(gray!25) 0}{*(gray!25) 0}{0}{0}{0}, {\none}{0}{*(gray!25) \rd 0}{\rd 0}{0}{*(gray!70)3}, {\none}{\none}{*(gray!25)0}{*(gray!25) 0}{0}, {\none}{\none}{\none}{*(gray!25)0}{*(gray!25) 0}, {\none}{\none}{\none}{\none}{0}}
\qquad \ytableaushort
{{0}{0}{0}{0}{*(gray!25) \rd 0}{*(gray!25) 0}{*(gray!70)0}, {\none}{0}{\rd 0}{\rd 0}{0}, {\none}{\none}{0}{0}{0}, {\none}{\none}{\none}{0}{0}, {\none}{\none}{\none}{\none}{0}}
\qquad \ytableaushort
{{*(gray!25)0}{*(gray!25) 0}{*(gray!25) 0}{*(gray!25) 0}{*(gray!25) 0}{0}, {\none}{0}{\rd 0}{\rd 0}{*(gray!25) \rd 0}{*(gray!70)0}, {\none}{\none}{0}{0}{0}, {\none}{\none}{\none}{0}{0}, {\none}{\none}{\none}{\none}{0}}
\end{center}

\medskip

Let us show an example in type D. Again, take $\lambda = 865321$, $\mu = 431$, and insert variable $x_1$ (i.e.~$k=1$) into the tableau
$$\ytableausetup{boxsize=1.25em}
 \ytableausetup{aligntableaux=top}
 \ytableaushort
{{\rd 0}{\rd 0}{\rd 0}{\rd 2},{\none}{0}{\rd 1}{2},{\none}{\none}{\rd 2}}$$

Note that $\lambda_1 = 8$ is not a part of $\mu$, so $1$ is indeed in $\p W^D(\mu,\lambda)$.

\medskip

By the definition of the insertion process, we first take $(i,j) = (0,0)$ and $d = \: \downarrow$. Because direction is downward, we move one row down, to row $i' = 1$, and try to insert variable $x_1$ as a red entry. However, in row $j' \geq 2$, $x_1$ is represented by a red $1-j'$, which is always negative. It is also odd for $j' = 2 = i'+1$, so we can insert it (only) as a black $k-i' = 0$ in position $(i',j') = (1,2)$. The index variable that is bumped out is $k' = 2$ (represented by the red $0$ in column $2$). Even though the entry in $(i',j')$ is red, we bump it to the right (because $j' = i'+1$). In other words, $d' = \: \rightarrow$. The variable $x_2$ can be represented as a black $1$ in row $1$ (impossible, as then the entry in position $(1,3)$ would be larger than the one in position $(2,3)$) or a black $0$ in row $2$. We choose the latter option. In other words, $(i'',j'') = (2,3)$, and the variable bumped out is $x_2$ (note that we just saw that it can happen that the bumped entry remains unchanged), which gets bumped to the right. It lands in position $(2,4)$ and bumps out the red $1$, representing the variable $x_5$. This variable travels downward; it can be represented as a black $2$ in position $(3,4)$ (not red $1$, since odd numbers are forbidden on the diagonal!) or red $0$ in position $(3,5)$. The latter is not allowed, as the entry in $(3,5)$ cannot be smaller than the one in $(3,4)$. So the variable bumps out the red $2$. Since the red $2$, representing the variable $x_6$, was on the diagonal, it goes to the right, and lands in position $(3,5)$ as a black $3$. The bumps are represented in the following figure. The gray cells show the positions of bumped cells.

\medskip

\begin{center}
   \ytableausetup{boxsize=1.25em}
 \ytableausetup{aligntableaux=top}
 \ytableaushort
{{\rd 0}{\rd 0}{\rd 0}{\rd 2},{\none}{0}{\rd 1}{2},{\none}{\none}{\rd 2}}
\qquad \ytableaushort
{{*(gray!25) 0 \rd 0}{\rd 0}{\rd 0}{\rd 2},{\none}{0}{\rd 1}{2},{\none}{\none}{\rd 2}}
\qquad \ytableaushort
{{0}{\rd 0}{\rd 0}{\rd 2},{\none}{*(gray!25) 00}{\rd 1}{2},{\none}{\none}{\rd 2}}
\qquad \ytableaushort
{{0}{\rd 0}{\rd 0}{\rd 2},{\none}{0}{*(gray!25) 0 \rd 1}{2},{\none}{\none}{\rd 2}}
\qquad \ytableaushort
{{0}{\rd 0}{\rd 0}{\rd 2},{\none}{0}{0}{2},{\none}{\none}{*(gray!25) 2 \rd 2}}
\qquad \ytableaushort
{{0}{\rd 0}{\rd 0}{\rd 2},{\none}{0}{0}{2},{\none}{\none}{2}{*(gray!70) 3}}
\end{center}

\medskip

Since $(3,5) \notin [431]^D$, the insertion process is done. On the other hand, the repeated insertion process is not, since the entry in position $(3,5)$ violates condition $\leq 2$. Therefore we remove this cell, and insert the variable $x_6$ into the resulting tableau. The insertion process is represented in the following figure.

\medskip

\begin{center}
   \ytableausetup{boxsize=1.25em}
 \ytableausetup{aligntableaux=top}
 \ytableaushort
{{0}{\rd 0}{\rd 0}{\rd 2},{\none}{0}{0}{2},{\none}{\none}{2}}
\qquad \ytableaushort
{{0}{\rd 0}{\rd 0}{*(gray!25) \rd 1 \rd 2},{\none}{0}{0}{2},{\none}{\none}{2}}
\qquad \ytableaushort
{{0}{\rd 0}{\rd 0}{\rd 1},{\none}{0}{0}{*(gray!25) \rd 2 2},{\none}{\none}{2}}
\qquad \ytableaushort
{{0}{\rd 0}{\rd 0}{\rd 1}{*(gray!70) 3},{\none}{0}{0}{\rd 2},{\none}{\none}{2}}
\end{center}

\medskip

Again, the repeated insertion process is not finished, since the entry in position $(1,6)$ is $> 2$. We remove the cell, and insert $x_4$ into the remaining tableau. The insertion process is now as follows:

\medskip

\begin{center}
   \ytableausetup{boxsize=1.25em}
 \ytableausetup{aligntableaux=top}
 \ytableaushort
{{0}{\rd 0}{\rd 0}{\rd 1},{\none}{0}{0}{\rd 2},{\none}{\none}{2}}
\qquad \ytableaushort
{{0}{\rd 0}{*(gray!25) \rd 0 \rd 0}{\rd 1},{\none}{0}{0}{\rd 2},{\none}{\none}{2}}
\qquad \ytableaushort
{{0}{\rd 0}{\rd 0}{\rd 1},{\none}{0}{*(gray!25) \rd 0 0}{\rd 2},{\none}{\none}{2}}
\qquad \ytableaushort
{{0}{\rd 0}{\rd 0}{*(gray!25) 1 \rd 1},{\none}{0}{\rd 0}{\rd 2},{\none}{\none}{2}}
\qquad \ytableaushort
{{0}{\rd 0}{\rd 0}{1},{\none}{0}{\rd 0}{*(gray!25) \rd 1 \rd 2},{\none}{\none}{2}}
\qquad \ytableaushort
{{0}{\rd 0}{\rd 0}{1},{\none}{0}{\rd 0}{\rd 1},{\none}{\none}{2}{*(gray!70) \rd 2}}
\end{center}

\medskip

The last shifted bicolored tableau is in $\p B^D(431,865321)$, so we are done with the repeated insertion process.

\medskip

If we perform the repeated insertion process on all 56,832 pairs $(S,k)$ in
$$\p B^D(431,865321) \times \p W^D(431,865321),$$
the process needs just one insertion in 42,672 cases, two insertions in 11,087 cases, three insertions in 2,182 cases, four insertions in 741 cases, five insertions in 88 cases, and six insertions in 62 cases.

\medskip

We can easily adapt the second example for type B to show that repeated insertion process in type D may also need many steps. Note also that the complexity does not come from the fact that the diagram is shifted: there are also examples with many steps for the repeated insertion process in \cite{kon:naruse}.

\subsection*{Properties of the bump, the insertion process, and the repeated insertion process}

Say that we have a tableau $S$, $(i,j)$, $d$ and $k$, and that $d = \: \rightarrow$. The bump algorithm in type B says that we should take $j' = j + 1$, and find the largest possible $i'$ so that we can write $k - i'$ in position $(i',j')$ while keeping the column $j'$ weakly increasing. Since the sequence $(S_{i'j'})_{i' \geq 0}$ is weakly increasing, $(S_{i'j'} + i')_{i' \geq 0}$ is strictly increasing. There are two options. One is that $k = S_{i''j'} + i''$ for some $i''$. In that case, we have just one possible choice for $i'$, and that is $i''$. The entry $S_{i'j'}$ remains unchanged, so the new tableau is still a valid shifted bicolored tableau. Also, we cannot have $i' > i$, since then $k - i' \geq S_{ij'}$, and that contradicts the assumption that we can write $k - i > k - i'$ in position $(i,j)$ in $S$. If, on the other hand, we have $S_{i''-1,j'} + i'' - 1 < k < S_{i''j'} + i''$ for some $i''$, then $S_{i''-1,j'} < k - (i''-1) \leq S_{i''j'}$ and $S_{i''-1,j'} \leq k - i'' < S_{i''j'}$. So we can either write $k - (i''-1)$ in position $(i''-1,j')$ or $k - i''$ in position $(i'',j')$. We select $i' = i''$. Now the entry in position $(i',j')$ becomes smaller, and we have to check that $S_{i',j'-1} = S_{i'j} \leq k - i'$. Again, we cannot have $i' > i$, so the fact that $k-i$ can be written in position $(i,j)$ means that $k - i' \geq k - i \geq S_{i'j}$.

\medskip

The analysis when  $d = \: \downarrow$ is very similar and we leave it as an exercise for the reader. In summary, the process either moves one column to the right and weakly up, or one row down and weakly to the left, and numbers in the tableau can only get smaller.

\medskip

In order to prove that the insertion process terminates, it is enough to prove that a certain (integer) quantity with an upper bound increases at each step. We claim that such a quantity is
$$\left\{ \begin{array}{ccl} k + j & \colon & d = \: \rightarrow \\ k + i & \colon & d = \: \downarrow \end{array}\right.$$
When we go to the right, $j' = j + 1$, and when we go down, $i' = i + 1$. In each case we bump a weakly larger number, so $k' \geq k$. It is also clear that this quantity is bounded by $\ell(\mu) + \mu_1 + \max T_{ij}$.

\medskip

The inverse of the bump is easy to construct, and repeated application of it gives the inverse of the insertion process. The fact that the bump (and consequently $\psi_\mu$) is weight preserving is obvious. We omit the details for type D bump and insertion.


\medskip

The fact that the repeated insertion process terminates (and is a weight-preserving bijection) can be proved in a very similar way as the proof of \cite[Theorem 11]{kon:naruse} using sieve equivalence. More precisely, in order for that lemma to apply and complete the proof of Theorem \ref{thm1}, we need the following result.

\begin{lemma}
 Let $\psi_\mu(T,k)$ denote the result of type B insertion of variable index $k$ into the tableau $T \in \p B^B(\mu)$. If $S \in \p B^B(\nu)$ for $\mu \lessdot \nu$, $T$ is the tableau of shape $\mu$ that we obtain if we remove the unique cell of $S$ outside of $\mu$, and $k$ is the variable index of the removed entry, then write $\varphi_\mu(S) = (T,k)$. Denote the (finite) set $\p B^B(\mu,\lambda) \times \p W(\lambda)$ by $A$, its image under $\psi_\mu$ by $B$, its subset $\p B^B(\mu,\lambda) \times \p W^B(\mu,\lambda)$ by $X$, and the set $\bigcup_\nu \p B^B(\nu,\lambda)$ by $Y$. Then the restriction of $\varphi_\mu$ to $B \setminus Y$ is an injection with image $A \setminus X$.
\end{lemma}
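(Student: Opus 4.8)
The plan is to analyze exactly when the insertion map $\psi_\mu$ produces a tableau outside of $Y = \bigcup_\nu \p B^B(\nu,\lambda)$, and to recognize that in that case $\varphi_\mu$ undoes the insertion. The key structural fact, established in the ``Properties'' subsection above, is that the insertion process is a bijection $\psi^B_\mu \colon \p B^B(\mu) \times \{x_1,x_2,\ldots\} \to \bigcup_\nu \p B^B(\nu)$ (Theorem \ref{welldefined}), with inverse obtained by repeatedly running the bump backwards; thus $\varphi_\mu$ (removing the unique cell outside $\mu$ and reading off its variable index) is a well-defined inverse to $\psi_\mu$ on all of $\bigcup_\nu\p B^B(\nu)$. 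So the content of the lemma is purely about how the constraints ``$\leq \lambda$'' (which cut $\p B^B(\nu)$ down to $\p B^B(\nu,\lambda)$, and $\{x_1,x_2,\ldots\}$ down to $\p W(\lambda)$ or $\p W^B(\mu,\lambda)$) interact with $\psi_\mu$.

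\textbf{Step 1: $\varphi_\mu(B\setminus Y)\subseteq A\setminus X$.} Take $S\in B\setminus Y$, so $S=\psi_\mu(T,k)$ with $T\in\p B^B(\mu,\lambda)$, $k\in\p W(\lambda)=\{x_1,\ldots,x_{\lambda_1+1}\}$, and $S\in\p B^B(\nu)\setminus\p B^B(\nu,\lambda)$ for the appropriate $\nu$ with $\mu\lessdot\nu$. First I would show $\varphi_\mu(S)=(T,k)\in A$, i.e.\ that $T\in\p B^B(\mu,\lambda)$ and $k\in\p W(\lambda)$ — but these hold by hypothesis, so the only real point is $(T,k)\notin X$, i.e.\ $k\notin\p W^B(\mu,\lambda)$. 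Here I would use the monotonicity property of the bump proved above (entries only get smaller, the path moves right-and-up or down-and-left) together with the explicit description of where $S$ fails the $\lambda$-condition. Concretely: $S$ has a single cell $(a,b)$ outside $[\mu]^B$, carrying variable index $k$ (this is exactly what $\varphi_\mu$ reads off, because the last bump deposits $k$ with direction $\circ$, and one checks the deposited index equals the original $k$ — or more precisely, tracking through, the cell outside $\mu$ always holds the variable we are ``still carrying''). Since $S\notin\p B^B(\nu,\lambda)$ but $T\in\p B^B(\mu,\lambda)$, the failure must be caused precisely by this new cell or by a cell that was pushed; using that entries only decrease, the violated inequality $b>\lambda_{a+S_{ab}}+a-1$ forces a relation on $k$ that is exactly the negation of the membership condition for $\p W^B(\mu,\lambda)$ — this is where the definitions of $\p W(\lambda)$ versus $\p W^B(\mu,\lambda)$ and Definition \ref{xi} get matched up. That gives $(T,k)\in A\setminus X$.

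\textbf{Step 2: injectivity.} Injectivity of $\varphi_\mu$ on $B\setminus Y$ is immediate from injectivity of $\varphi_\mu$ on all of $\bigcup_\nu\p B^B(\nu)$ (it is the inverse of the bijection $\psi_\mu$), so nothing extra is needed here.

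\textbf{Step 3: $A\setminus X\subseteq \varphi_\mu(B\setminus Y)$.} Conversely, take $(T,k)\in A\setminus X$, so $T\in\p B^B(\mu,\lambda)$, $k\in\p W(\lambda)$, but $k\notin\p W^B(\mu,\lambda)$. Let $S=\psi_\mu(T,k)\in\p B^B(\nu)$. I must show $S\in B\setminus Y$, i.e.\ that $S\notin\p B^B(\nu,\lambda)$. This is the converse direction of the computation in Step~1: the condition $k\notin\p W^B(\mu,\lambda)$ says there is some part of $\mu$ (or some column value) equal to the relevant function of $k$; tracing the insertion path, the bump carrying index $k$ cannot be absorbed inside $[\mu]^B$ in a way that respects $\lambda$, and must exit — producing a cell $(a,b)\notin[\mu]^B$ whose entry violates the $\lambda$-inequality. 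Then $\varphi_\mu(S)=(T,k)$ (inverse of $\psi_\mu$), so $(T,k)\in\varphi_\mu(B\setminus Y)$.

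\textbf{Main obstacle.} The routine parts are injectivity and the abstract bijectivity of $\psi_\mu$, which are already granted. The real work — and the step I expect to be delicate — is the precise matching in Steps~1 and~3 between ``the inserted variable $x_k$ exits the diagram $[\mu]^B$ while respecting the $\lambda$-bound'' and ``$k\notin\p W^B(\mu,\lambda)$'', i.e.\ showing the obstruction is governed by exactly the combinatorial condition defining $\p W^B(\mu,\lambda)$ in Definition \ref{ws}. This requires carefully following which cell the index $k$ ends up in, using the monotonicity of the bump (entries only decrease along the path) to control the relevant inequality, and then translating $b\le\lambda_{a+r}+a-1$ into the language of parts of $\mu$ versus $\lambda$ and of the $x^B_i$ from Definition \ref{xi}. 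This is essentially the type-B shifted analogue of the corresponding argument in \cite{kon:naruse}, and I would model the bookkeeping on that proof, flagging only the points where the diagonal cells and the monomial (rather than binomial) contributions force a genuinely new case.
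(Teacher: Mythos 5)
Your plan founders on one central misidentification: you treat $\varphi_\mu$ as the inverse of $\psi_\mu$. It is not. The inverse of the insertion process is obtained by running the bumps backwards, undoing every change made along the insertion path; $\varphi_\mu$, by contrast, simply deletes the unique cell outside $[\mu]^B$ and reads off its variable index, leaving all the bumped interior entries as they stand. The paper's first worked example already refutes your claim: inserting $k=1$ into the displayed tableau of shape $431$ changes several interior entries and deposits a black $3$ in position $(1,5)$, so $\varphi_\mu(\psi_\mu(T,1))=(T',4)$ with $T'\neq T$ and $4\neq 1$. Indeed, if $\varphi_\mu$ were $\psi_\mu^{-1}$, the repeated insertion process would cycle forever between $(T,k)$ and $\psi_\mu(T,k)$ whenever the first insertion lands outside $Y$; the whole point of the sieve argument is that $\varphi_\mu$ is a \emph{different} partial inverse, and the lemma's content is precisely that this different map restricts to a bijection $B\setminus Y\to A\setminus X$.

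This error propagates through all three steps. In Steps 1 and 3 the identity ``$\varphi_\mu(S)=(T,k)$'' is false, so neither inclusion is actually established. In Step 2, injectivity on $B\setminus Y$ is not ``immediate'': $\varphi_\mu$ is not even injective on all of $\bigcup_\nu\p B^B(\nu)$, since two tableaux agreeing on $[\mu]^B$ whose extra cells sit at different outer corners (say a black $r_1$ at $(a_1,b_1)$ and a black $r_2$ at $(a_2,b_2)$ with $a_1+r_1=a_2+r_2$) have the same image. Injectivity on $B\setminus Y$, the determination of the image, and the matching with $\p W(\lambda)\setminus\p W^B(\mu,\lambda)$ are all genuine content that must be argued from the combinatorics of the bump, as in the proof of \cite[Lemma 12]{kon:naruse} to which the paper defers. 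Your observation that the $\lambda$-violation can only occur at the new cell (because entries only decrease along the insertion path) is a correct and useful ingredient, but it does not substitute for the missing analysis of which outer corner an entry representing $x_{k'}$ can occupy in an element of $B\setminus Y$, and why that position together with $(T',k')$ determines $S$ uniquely.
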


The proof is analogous to the proof of \cite[Lemma 12]{kon:naruse}, and proves that the repeated insertion process is a weight-preserving bijection that completes the proof of the main theorem for type B. Again, we omit the details for type D.

\subsection*{Complexity of the algorithm} Let $m \geq 1$. Let $S^{m,0}$ be the shifted bicolored tableau of type B and shape $\mu^{(m)} = (m+2,m,m-1,m-2,\ldots,1)$ with black $0$'s everywhere except in columns $3,\ldots,m+1$ of row $2$, where it has red $0$'s, let $k_0 = 2$ and $\lambda^{(m)} = (m+2,m+1,m-1,m-2,\ldots,1)$. Our second example in Section \ref{proofs} is the repeated insertion of $(S^{4,0},k_0)$. The following is defined for $m \geq 1$, $n \geq 0$, and is clearly a generalization of the definition of $S^{m,0}$:

$$S^{m,n}_{ij} = \left\{ \begin{array}{ccl}
0 & \colon & i = j \\
{\rd 0} & \colon & j > i = 1, \, n \mbox{ odd and } (n + 1)/2^{j-2} \in \{2,3,5,7,9,11,\ldots\} \\
0 & \colon & j > i = 1, \,  n \mbox{ even or } (n + 1)/2^{j-2} \notin \{2,3,5,7,9,11,\ldots\} \\
0 &\colon & j > i = 2, \,  j = 1 + \mbox{number of digits in binary expansion of } n \\
{\rd 0} &\colon & j > i = 2, \,  j \neq 1 + \mbox{number of digits in binary expansion of } n \\
{\rd 0} & \colon & j > i \geq 3, \,  (\lfloor \frac{n+2}{2^{i-2}} \rfloor - 1)/2^{j-i} \mbox{ odd positive integer}\\
0 & \colon & j > i \geq 3, \,  (\lfloor \frac{n+2}{2^{i-2}} \rfloor - 1)/2^{j-i} \mbox{ not odd positive integer}\end{array} \right.$$
$$k_n = \left\{ \begin{array}{ccl} 1 & \colon & n \mbox{ odd} \\ j+1 & \colon & n = 2^j - 2 \\ j + 2 & \colon & n \mbox{ even, }(n+2)/2^j \mbox{ odd integer} \end{array} \right.$$

\begin{prop}
 For $0 \leq n < 2^m$, the insertion process of type B with input $(S^{m,n},k_n)$ ouputs the tableau obtained from $S^{m,n+1}$ by adding a black $k_{n+1} - 1 = 0$ in position $(1,m+3)$ if $n$ is even, and a black $k_{n+1} - 2$ in position $(2,m+2)$ if $n$ is odd, except when $n = 2^m - 1$, where the output has a black $0$ in position $(2,m+2)$. Therefore the repeated insertion process for $(S^{m,0},k_0)$ stops after precisely $2^m$ steps.
\end{prop}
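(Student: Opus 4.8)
# Proof proposal

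The plan is to prove the proposition by a careful induction on $n$, tracking the \emph{insertion path} precisely. The heart of the matter is that the family of tableaux $S^{m,n}$ is engineered so that the insertion of $k_n$ traces a path whose shape is dictated by the binary expansion of $n$; the closed formulas for $S^{m,n}_{ij}$ and $k_n$ are exactly the bookkeeping needed to make the induction go through. Since the first part of the statement (the explicit description of the output of one insertion step) immediately implies the second part (that repeated insertion halts after exactly $2^m$ steps), I would focus entirely on the inductive claim about a single insertion, then read off the conclusion: the output of insertion $n$ is $S^{m,n+1}$ together with one extra cell outside $\mu^{(m)}$ (in position $(1,m+3)$ or $(2,m+2)$); that extra cell is precisely the one that the repeated insertion process strips off before calling insertion $n+1$ with index $k_{n+1}$; and when $n = 2^m-1$ the extra cell lands in position $(2,m+2)$, which \emph{is} inside $\lambda^{(m)}$ (since $\lambda^{(m)}_2 = m+1$, so column $m+2$ is allowed in row $2$), so the process terminates with a tableau in $\p B^B(\mu^{(m)} \cup \{(2,m+2)\}, \lambda^{(m)})$.

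The key steps, in order: (1) Split the argument into the parity cases $n$ even and $n$ odd, since $k_n$ and the starting direction behave differently. For $n$ odd, $k_n = 1$: the insertion starts at $(0,0)$ going down, immediately writes a black $0$ on the diagonal cell $(1,1)$, bumps out a black $0$, turns right, and proceeds along row $1$ (and possibly drops into lower rows). For $n$ even, $k_n = j+2$ where $(n+2)/2^j$ is an odd integer: here the path will travel down into row $2$ and then snake through the lower-triangular part, governed by the quantity $\lfloor (n+2)/2^{i-2}\rfloor$ appearing in the formula for $S^{m,n}_{ij}$. (2) In each case, verify step-by-step that the bump rule (black moves right and stays black, red moves down and stays red) applied to $S^{m,n}$ produces exactly the cell changes that convert $S^{m,n}$ into $S^{m,n+1}$, plus the claimed extra cell. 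The crucial algebraic identities to check are: how the binary expansion of $n+1$ relates to that of $n$ (a carry propagating through a block of trailing $1$'s), and that this carry propagation matches the cascade of bumps. (3) Confirm that at every intermediate cell the "largest index" choice in the bump is forced to be the value prescribed by the formula — i.e. that the weakly-increasing constraint pins down $i'$ or $j'$ uniquely at each step, using the monotonicity established in the "Properties of the bump" subsection.

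The main obstacle — and where most of the work lies — is step (2) for $n$ even, matching the bump cascade against the floor-function description of the rows $i \geq 3$. When $n$ is even and $(n+2)/2^j$ is odd, incrementing $n$ changes $\lfloor (n+2)/2^{i-2}\rfloor$ for exactly the rows $i \leq j+1$, and in a way that flips "odd positive integer" to "not odd positive integer" (or vice versa) for precisely the cells the path visits. One must show: the insertion path enters row $i$ for each $3 \leq i \leq j+1$, passes through exactly the cells whose color is flipped, and exits via the bottom of the triangle, ultimately depositing the new entry back in row $2$ at column $m+2$ with value $k_{n+1}-2$. Verifying that the path length is what forces $2^m$ total steps amounts to checking that each insertion "increments a binary counter," so the $n$-th insertion touches $\Theta(\nu_2(n+2))$ cells where $\nu_2$ is the $2$-adic valuation — but for the proposition itself only the combinatorial identity of the output tableau is needed, not the step count, so I would prove the output identity carefully and then remark that summing over $n = 0, \ldots, 2^m - 1$ gives the count. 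The odd-$n$ case is comparatively easy: the path stays mostly in row $1$, the binary expansion of an odd $n$ changes only in its last bit, and the "number of digits in binary expansion" bookkeeping in row $2$ is updated by a single cell. I would present the even case in full detail and leave the odd case, together with the entirely parallel type D construction, to the reader, consistent with the paper's style.
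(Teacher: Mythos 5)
Your overall strategy -- induct on $n$, track the insertion path cell by cell, and match the cascade of bumps against a binary-counter increment -- is the right one, and it is what the paper intends (the paper itself gives no details, saying only that the even case is easy, the odd case is tedious bookkeeping, and leaving both as an exercise). However, you have the two parity cases exactly backwards, and this matters for your proposed division of labor. For $n$ even, $k_n\geq 2$ and every entry of row $1$ of $S^{m,n}$ is a black $0$; the insertion simply deposits a red $0$ at $(1,k_n)$ and then pushes black $0$'s rightward along row $1$ until it exits at $(1,m+3)$. It never enters row $2$, let alone the lower triangle. This is visible in the worked $m=4$ example (the $1$st, $3$rd, $5$th, \dots\ diagrams) and is the case the paper calls easy. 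For $n$ odd, $k_n=1$: the path starts on the diagonal, runs right along row $1$ until it meets the unique red $0$ in row $1$ (whose column is governed by the $2$-adic valuation of $n+1$, i.e.\ by the run of trailing $1$'s in the binary expansion of $n$), then drops into row $2$ and snakes through rows $3,\ldots$ according to the floor-function conditions in the definition of $S^{m,n}$. This is the carry-propagation case and the tedious one; note also that incrementing an \emph{odd} $n$ is what triggers a carry through a block of bits, not incrementing an even $n$ as you state.

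Consequently your plan to ``present the even case in full detail and leave the odd case to the reader'' would write out the trivial case and omit precisely the part that carries all the content. The fix is just to swap the roles: do the odd case in detail (verifying that the snaking path flips exactly the red/black colors needed to turn $S^{m,n}$ into $S^{m,n+1}$ and exits at $(2,m+2)$ with value $k_{n+1}-2$), and dispatch the even case in a few lines. One smaller point: for the ``therefore'' clause you should check not merely that the new cell lies in $[\lambda^{(m)}]^B$ but that the membership condition for $\p B^B(\nu,\lambda^{(m)})$, namely $j\leq\lambda_{i+T_{ij}}+i-1$, fails at every intermediate step (it fails at $(1,m+3)$ because $m+3>\lambda_1=m+2$, and at $(2,m+2)$ whenever the deposited value $k_{n+1}-2$ is positive, since $\lambda_3=m-1$) and holds only at $n=2^m-1$, when the deposited value is $0$.
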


The proof is easy for $n$ even, and a tedious book-keeping exercise for $n$ odd. We leave it is an exercise for the reader. It is clear that the proposition proves Theorem \ref{complexity}.

\section*{Acknowledgments}

The author would like to thank Alejandro Morales, Igor Pak and Greta Panova for telling him about the problem and the interesting discussions that followed. Many thanks to Darij Grinberg for suggesting the appropriate weighted generalization and other useful comments, and to Sara Billey for fruitful talks about the subject.

\bibliographystyle{siam}

\def\cprime{$'$}

\end{document}